\title{Direct Solution to Constrained Tropical Optimization Problems with Application to\\ Project Scheduling}
\author{N. Krivulin\thanks{Faculty of Mathematics and Mechanics, Saint Petersburg State University, 28 Universitetsky Ave., Saint Petersburg, 198504, Russia, 
nkk@math.spbu.ru.}
}
\date{}
\newtheorem{theorem}{Theorem}
\newtheorem{lemma}[theorem]{Lemma}
\newtheorem{corollary}[theorem]{Corollary}
\begin{document}

\maketitle

\begin{abstract}
We examine a new optimization problem formulated in the tropical mathematics setting as a further extension of certain known problems. The problem is to minimize a nonlinear objective function, which is defined on vectors over an idempotent semifield by using multiplicative conjugate transposition, subject to inequality constraints. As compared to the known problems, the new one has a more general objective function and additional constraints. We provide a complete solution in an explicit form to the problem by using an approach that introduces an auxiliary variable to represent the values of the objective function, and then reduces the initial problem to a parametrized vector inequality. The minimum of the objective function is evaluated by applying the existence conditions for the solution of this inequality. A complete solution to the problem is given by solving the parametrized inequality, provided the parameter is set to the minimum value. As a consequence, we obtain solutions to new special cases of the general problem. To illustrate the application of the results, we solve a real-world problem drawn from time-constrained project scheduling, and offer a representative numerical example. 
\\

\textbf{Key-Words:} tropical mathematics, idempotent semifield, constrained optimization, complete solution, time-constrained project scheduling.
\\

\textbf{MSC (2010):} 65K10, 15A80, 65K05, 90C48, 90B35

\end{abstract}

\section{Introduction}

Tropical optimization problems constitute an important research and application domain of tropical mathematics. As an applied mathematical discipline that concentrates on the theory and methods of semirings with idempotent addition, tropical (idempotent) mathematics dates back to the works of \cite{Pandit1961Anew,Cuninghamegreen1962Describing,Hoffman1963Onabstract,Vorobjev1963Theextremal} and \cite{Romanovskii1964Asymptotic}, at least two of which \cite{Cuninghamegreen1962Describing,Hoffman1963Onabstract} have been motivated and illustrated by optimization problems.

Many subsequent publications that contributed to the development of tropical mathematics, including the monographs by \cite{Cuninghamegreen1979Minimax,Zimmermann1981Linear,Kolokoltsov1997Idempotent,Gondran2008Graphs,Butkovic2010Maxlinear,Maclagan2015Introduction}, and a number of contributed papers, were concerned with optimization problems, most of which have been drawn from real-world applications in operations research and management science.

Multidimensional tropical optimization problems are generally formulated in the tropical mathematics setting to minimize or maximize linear and nonlinear functions defined on vectors over idempotent semifields (semirings with multiplicative inverses). The problems may include constraints given by linear and nonlinear equalities and inequalities. Many of the problems that come from real-world applications and, at the same time, admit solutions in the framework of tropical mathematics have nonlinear objective functions defined through multiplicative conjugate transposition of vectors (see, e.g., an overview in \cite{Krivulin2015Amultidimensional}).

There are problems with objective functions that involve the tropical algebraic product $\bm{x}^{-}\bm{A}\bm{x}$, where $\bm{A}$ is a given square matrix, $\bm{x}$ is the unknown vector, and $\bm{x}^{-}$ is the multiplicative conjugate transpose of $\bm{x}$. These functions appear in various applications in operations research and management science, including problems in project (machine) scheduling \cite{Cuninghamegreen1962Describing,Cuninghamegreen1979Minimax,Superville1978Various,Krivulin2015Extremal,Krivulin2015Tropicaloptimization,Krivulin2015Tropical}, location analysis \cite{Zimmermann1992Optimization,Hudec1993Aservice,Krivulin2011Anextremal}, and decision making \cite{Elsner2004Maxalgebra,Elsner2010Maxalgebra,Gursoy2013Theanalytic}, to name only a few.

The problem of minimizing the product in question was examined in early works \cite{Cuninghamegreen1962Describing,Engel1975Diagonal,Superville1978Various} by using conventional mathematical techniques. It was shown that the minimum in the problem is equal to the tropical spectral radius of the matrix $\bm{A}$, and attained at the corresponding tropical eigenvectors of this matrix. Later, the problem was formulated in the framework of tropical mathematics in \cite{Cuninghamegreen1979Minimax}, where a complete solution was proposed by reducing to a linear programming problem. Solutions based on tropical mathematics were derived in \cite{Elsner2004Maxalgebra,Elsner2010Maxalgebra}. The results of \cite{Elsner2010Maxalgebra} included an implicit description of a complete solution in the form of a vector inequality, and provided a computational procedure to solve the inequality. Finally, complete solutions in terms of tropical mathematics to both the problem and its generalizations, which have objective functions of an extended form as well as additional constraints, were given in \cite{Krivulin2014Aconstrained,Krivulin2015Extremal,Krivulin2015Amultidimensional}.

In this paper, we consider a new rather general optimization problem, which includes known problems as special cases. We provide a complete solution in an explicit form on the basis of the approach developed in \cite{Krivulin2014Aconstrained,Krivulin2015Extremal,Krivulin2015Amultidimensional}, which introduces an additional variable to represent the values of the objective function, and then reduces the initial problem to a parametrized vector inequality. The minimum of the objective function is evaluated by using the solution existence conditions for the inequality. A complete solution to the problem is given by the solutions of the parametrized inequality, provided the parameter is set to the minimum value. We discuss the computational complexity of the result to show that the solution can be obtained in polynomial time. As a consequence, we propose solutions to new special cases of the general problem.

We apply the results obtained to derive a new complete solution of a real-world problem that was drawn from project scheduling (see, e.g., \cite{Demeulemeester2002Project,Neumann2003Project,Tkindt2006Multicriteria} for further details on optimal scheduling), and also served to motivate the study. We consider a project that consists of activities operating in parallel under temporal constraints in various forms, including release dates and time windows. For each activity, the flow-time is defined to be the time interval between its initiation and completion. The objective is to find an optimal schedule that minimizes the maximum flow time over all activities. This problem is an extended version of that in \cite{Krivulin2015Extremal}, where a less complicated system of temporal constraints is considered. To illustrate the solution obtained for the problem, and the computational technique implemented by the solution, we present a representative numerical example.

Note that the problem under examination can be formulated as a linear program, and then solved by one of the known solution techniques of linear programming. However, these techniques usually take the form of iterative algorithms, and do not generally guarantee an explicit closed-form solution. Unlike the algorithmic approaches, the proposed solution provides direct results in a compact vector form suitable for further analysis and practical use. Considering, in addition, that the new solution can be calculated in polynomial time, it can certainly serve as a helpful complement and supplement to existing solutions. 

The paper is organized as follows. Section~\ref{S-BDNO} includes definitions and notation to be used in the subsequent sections. In Section~\ref{S-PR}, we present some preliminary results, including a binomial identity for matrices and the solution to linear inequalities. The main result is provided in Section~\ref{S-SOP}, where we first offer examples of known optimization problems, then formulate and solve a new general problem, discuss the computational complexity of the solution, and finally, give solutions to new special cases of the general problem. Section~\ref{S-APS} contains an application of the results in project scheduling, and concludes with a numerical example.

\section{Basic definitions, notation and observations}
\label{S-BDNO}

We start with a short introduction in the context of tropical (idempotent) algebra to offer a unified and self-contained framework for the formulation and solution of tropical optimization problems in the rest of the paper. Below, we follow the notation and results in \cite{Krivulin2014Aconstrained,Krivulin2015Extremal,Krivulin2015Amultidimensional}, which form a useful basis for the analysis and solution of the problems under study in a systematic manner and in a compact closed form. Further details on tropical mathematics at both introductory and advanced levels can be found in \cite{Baccelli1993Synchronization,Kolokoltsov1997Idempotent,Golan2003Semirings,Heidergott2006Maxplus,Gondran2008Graphs,Butkovic2010Maxlinear,Maclagan2015Introduction}.

\subsection{Idempotent semifield}

An idempotent semifield is an algebraic structure $(\mathbb{X},\oplus,\otimes,\mathbb{0},\mathbb{1})$, where $\mathbb{X}$ is a nonempty set, $\oplus$ and $\otimes$ are binary operations, called addition and multiplication, $\mathbb{0}$ and $\mathbb{1}$ are distinct elements in $\mathbb{X}$, called zero and one, such that $(\mathbb{X},\oplus,\mathbb{0})$ is an idempotent commutative monoid, $(\mathbb{X}\setminus\{\mathbb{0}\},\otimes,\mathbb{1})$ is an Abelian group, and multiplication distributes over addition. 

The semifield has idempotent addition, which implies that $x\oplus x=x$ for each $x\in\mathbb{X}$, and invertible multiplication, which allows each nonzero $x$ to have its multiplicative inverse $x^{-1}$ such that $x\otimes x^{-1}=\mathbb{1}$. 

Idempotent addition induces a partial order on $\mathbb{X}$ such that $x\leq y$ if and only if $x\oplus y=y$. It follows from this definition that $x\leq x\oplus y$ and $y\leq x\oplus y$. Furthermore, both operations $\oplus$ and $\otimes$ are monotone, which implies that the inequality $x\leq y$ yields $x\oplus z\leq y\oplus z$ and $x\otimes z\leq y\otimes z$ for all $z$. The inversion is antitone, which means that the inequality $x\leq y$ results in $x^{-1}\geq y^{-1}$ for nonzero $x$ and $y$. Finally, the inequality $x\oplus y\leq z$ is equivalent to the two inequalities $x\leq z$ and $y\leq z$.

It is assumed that the partial order can be extended to a linear one to take the semifield as linearly ordered. The relation symbols and the optimization objectives are considered below in terms of this order.

Integer powers are routinely used as shorthand for iterated multiplication such that $x^{0}=\mathbb{1}$ and $x^{m}=x\otimes x^{m-1}$ for all $x\in\mathbb{X}$ and integer $m\geq1$. Moreover, it is assumed that the equation $x^{m}=a$ has a solution for any $a\in\mathbb{X}$ and positive integer $m$, which extends the power notation to rational exponents, and thus makes the semifield algebraically complete (radicable). In the expressions that follow, the multiplication sign $\otimes$ is omitted for brevity. 

Examples of the semifield include $\mathbb{R}_{\max,+}=(\mathbb{R}\cup\{-\infty\},\max,+,-\infty,0)$ and $\mathbb{R}_{\min,\times}=(\mathbb{R}_{+}\cup\{+\infty\},\min,\times,+\infty,1)$, where $\mathbb{R}$ is the set of real numbers and $\mathbb{R}_{+}=\{x>0|x\in\mathbb{R}\}$, to list only a few.

The semifield $\mathbb{R}_{\max,+}$ is equipped with addition and multiplication defined, respectively, as $\max$ and $+$. Furthermore, the number $-\infty$ is taken as zero, and $0$ is as one. Each $x\in\mathbb{R}$ has the inverse $x^{-1}$, which corresponds to the opposite number $-x$ in the usual notation. The power $x^{y}$ exists for any $x,y\in\mathbb{R}$ and coincides with the ordinary arithmetic product $xy$. The order defined by idempotent addition is consistent with the conventional total order on $\mathbb{R}$. 

In $\mathbb{R}_{\min,\times}$, we have $\oplus=\min$, $\otimes=\times$, $\mathbb{0}=+\infty$ and $\mathbb{1}=1$. The inversion and exponentiation notations have the usual meaning. The relation $\leq$ defines an order that is opposite to the standard linear order on $\mathbb{R}$.

\subsection{Matrices and vectors}

The set of matrices of $m$ rows and $n$ columns over $\mathbb{X}$ is denoted $\mathbb{X}^{m\times n}$. A matrix with all entries equal to $\mathbb{0}$ is the zero matrix denoted by $\bm{0}$. A matrix is row- (column-) regular, if it has no zero rows (columns).

Matrix addition and multiplication, and scalar multiplication follow the usual rules with the scalar operations $\oplus$ and $\otimes$ in place of the ordinary addition and multiplication. The above inequalities, which represent properties of the scalar operations, are extended entry-wise to matrix inequalities.

For any matrix $\bm{A}\in\mathbb{X}^{m\times n}$, its transpose is the matrix $\bm{A}^{T}\in\mathbb{X}^{n\times m}$.

The square matrices of order $n$ form the set denoted by $\mathbb{X}^{n\times n}$. A square matrix having $\mathbb{1}$ along the diagonal and $\mathbb{0}$ elsewhere is the identity matrix denoted by $\bm{I}$. For any square matrix $\bm{A}$, the nonnegative integer power is defined as $\bm{A}^{0}=\bm{I}$ and $\bm{A}^{m}=\bm{A}\bm{A}^{m-1}$ for all integers $m\geq1$.

The trace of a matrix $\bm{A}=(a_{ij})\in\mathbb{X}^{n\times n}$ is given by
$$
\mathop\mathrm{tr}\bm{A}
=
a_{11}\oplus\cdots\oplus a_{nn}
=
\bigoplus_{i=1}^{n}a_{ii}.
$$

The trace possesses the usual properties given by the equalities
$$
\mathop\mathrm{tr}(\bm{A}\oplus\bm{B})
=
\mathop\mathrm{tr}\bm{A}
\oplus
\mathop\mathrm{tr}\bm{B},
\qquad
\mathop\mathrm{tr}(\bm{A}\bm{B})
=
\mathop\mathrm{tr}(\bm{B}\bm{A}),
\qquad
\mathop\mathrm{tr}(x\bm{A})
=
x\mathop\mathrm{tr}\bm{A},
$$
which are valid for any matrices $\bm{A},\bm{B}\in\mathbb{X}^{n\times n}$ and scalar $x\in\mathbb{X}$. 

A matrix with only one column (row) is a column (row) vector. In what follows, all vectors are column vectors unless otherwise indicated. The set of column vectors of order $n$ is denoted $\mathbb{X}^{n}$.

A vector is regular if it has only nonzero elements. Let $\bm{x}\in\mathbb{X}^{n}$ be a regular vector and $\bm{A}\in\mathbb{X}^{n\times n}$ be a row-regular matrix. Then, the result of the multiplication $\bm{A}\bm{x}$ is a regular vector. If the matrix $\bm{A}$ is column-regular, then the row vector $\bm{x}^{T}\bm{A}$ is regular as well.

For any nonzero vector $\bm{x}\in\mathbb{X}^{n}$, its multiplicative conjugate transpose is the row vector $\bm{x}^{-}=(x_{i}^{-})$, where $x_{i}^{-}=x_{i}^{-1}$ if $x_{i}\ne\mathbb{0}$, and $x_{i}^{-}=\mathbb{0}$ otherwise.

The conjugate transposition exhibits some significant properties to be used later. Specifically, if $\bm{x}$ and $\bm{y}$ are regular vectors of the same order, then the inequality $\bm{x}\leq\bm{y}$ implies $\bm{x}^{-}\geq\bm{y}^{-}$ and vice versa. Furthermore, for any nonzero vector $\bm{x}$, the equality $\bm{x}^{-}\bm{x}=\mathbb{1}$ holds. Finally, if the vector $\bm{x}$ is regular, then the matrix inequality $\bm{x}\bm{x}^{-}\geq\bm{I}$ is also valid. 

A scalar $\lambda\in\mathbb{X}$ is an eigenvalue of a matrix $\bm{A}\in\mathbb{X}^{n\times n}$, if there exists a nonzero vector $\bm{x}\in\mathbb{X}^{n}$ such that $\bm{A}\bm{x}=\lambda\bm{x}$. The maximum eigenvalue is referred to as the spectral radius of $\bm{A}$, and given by \cite{Cuninghamegreen1962Describing,Vorobjev1963Theextremal,Romanovskii1964Asymptotic}
$$
\lambda
=
\mathop\mathrm{tr}\nolimits\bm{A}\oplus\cdots\oplus\mathop\mathrm{tr}\nolimits^{1/n}(\bm{A}^{n})
=
\bigoplus_{m=1}^{n}\mathop\mathrm{tr}\nolimits^{1/m}(\bm{A}^{m}).
$$

\section{Preliminary results}
\label{S-PR}

We now offer some auxiliary results to be used in the subsequent analysis of optimization problems. We start with binomial identities for square matrices, and then describe solutions to linear vector inequalities.

The inequalities are examined using somewhat different techniques and notation by many authors, including \cite{Vorobjev1963Theextremal,Cuninghamegreen1979Minimax,Zimmermann1981Linear,Baccelli1993Synchronization,Gondran2008Graphs}. Below, we offer solutions given in a compact vector form that provides a natural basis for solving the optimization problems in a straightforward and concise manner.  

\subsection{Binomial identities}

Let $\bm{A}$ and $\bm{B}$ be square matrices of the same order, and $m$ be a positive integer. Then, the following binomial identity clearly holds:
$$
(\bm{A}\oplus\bm{B})^{m}
=
\bigoplus_{k=1}^{m}\mathop{\bigoplus\hspace{1.3em}}_{i_{0}+i_{1}+\cdots+i_{k}=m-k}\bm{B}^{i_{0}}(\bm{A}\bm{B}^{i_{1}}\bm{A}\bm{B}^{i_{2}}\cdots\bm{A}\bm{B}^{i_{k}})
\oplus
\bm{B}^{m}.
$$

As an extension of this identity, we derive the following results. First, after summation over all $m$ and rearrangement of the output to collect terms of like number of cofactors $\bm{A}$, we obtain the matrix equality
\begin{equation}
\bigoplus_{k=1}^{m}(\bm{A}\oplus\bm{B})^{k}
=
\bigoplus_{k=1}^{m}\mathop{\bigoplus\hspace{0.2em}}_{0\leq i_{0}+i_{1}+\cdots+i_{k}\leq m-k}\bm{B}^{i_{0}}(\bm{A}\bm{B}^{i_{1}}\cdots\bm{A}\bm{B}^{i_{k}})
\oplus
\bigoplus_{k=1}^{m}\bm{B}^{k}.
\label{E-ABk}
\end{equation}

Furthermore, by applying the trace and by using its properties, we rewrite \eqref{E-ABk} in the form of the scalar equality
\begin{equation}
\bigoplus_{k=1}^{m}\mathop\mathrm{tr}(\bm{A}\oplus\bm{B})^{k}
=
\bigoplus_{k=1}^{m}\mathop{\bigoplus\hspace{1.5em}}_{0\leq i_{1}+\cdots+i_{k}\leq m-k}\mathop\mathrm{tr}(\bm{A}\bm{B}^{i_{1}}\cdots\bm{A}\bm{B}^{i_{k}})
\oplus
\bigoplus_{k=1}^{m}\mathop\mathrm{tr}\bm{B}^{k}.
\label{E-trABk}
\end{equation}

Both identities \eqref{E-ABk} and \eqref{E-trABk} are used below to expand matrix expressions in evaluating the minimum of the objective function.

\subsection{Linear inequalities}

Suppose that, given a matrix $\bm{A}\in\mathbb{X}^{m\times n}$ and a regular vector $\bm{d}\in\mathbb{X}^{m}$, the problem is to find all vectors $\bm{x}\in\mathbb{X}^{n}$ that satisfy the inequality
\begin{equation}
\bm{A}\bm{x}
\leq
\bm{d}.
\label{I-Axd}
\end{equation}

A complete direct solution to the problem under fairly general conditions can be found in the following form (see, e.g., \cite{Krivulin2015Extremal}).
\begin{lemma}
\label{L-IxdA}
For any column-regular matrix $\bm{A}$ and regular vector $\bm{d}$, all solutions to \eqref{I-Axd} are given by
\begin{equation*}
\bm{x}
\leq
(\bm{d}^{-}\bm{A})^{-}.
\end{equation*}
\end{lemma}

Furthermore, we consider the problem: given a matrix $\bm{A}\in\mathbb{X}^{n\times n}$ and a vector $\bm{b}\in\mathbb{X}^{n}$, find all regular vectors $\bm{x}\in\mathbb{X}^{n}$ that solve the inequality
\begin{equation}
\bm{A}\bm{x}\oplus\bm{b}
\leq
\bm{x}.
\label{I-Axbx}
\end{equation}

To describe a solution to inequality \eqref{I-Axbx} in a compact form, we introduce a function that maps each matrix $\bm{A}\in\mathbb{X}^{n\times n}$ onto the scalar
$$
\mathop\mathrm{Tr}(\bm{A})
=
\mathop\mathrm{tr}\bm{A}\oplus\cdots\oplus\mathop\mathrm{tr}\bm{A}^{n},
$$
and use the asterate operator (the Kleene star), which takes $\bm{A}$ to the matrix
$$
\bm{A}^{\ast}
=
\bm{I}\oplus\bm{A}\oplus\cdots\oplus\bm{A}^{n-1}.
$$

Presented below is a complete solution proposed in \cite{Krivulin2015Amultidimensional}.
\begin{theorem}
\label{T-Axbx}
For any matrix $\bm{A}$ and vector $\bm{b}$, the following statements hold:
\begin{enumerate}
\item If $\mathop\mathrm{Tr}(\bm{A})\leq\mathbb{1}$, then all regular solutions to inequality \eqref{I-Axbx} are given by $\bm{x}=\bm{A}^{\ast}\bm{u}$, where $\bm{u}$ is a regular vector such that $\bm{u}\geq\bm{b}$.
\item If $\mathop\mathrm{Tr}(\bm{A})>\mathbb{1}$, then there is no regular solution.
\end{enumerate}
\end{theorem}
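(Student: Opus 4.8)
The plan is to split the combined inequality \eqref{I-Axbx} into the two equivalent componentwise conditions $\bm{A}\bm{x}\leq\bm{x}$ and $\bm{b}\leq\bm{x}$, using the scalar rule that $x\oplus y\leq z$ amounts to $x\leq z$ together with $y\leq z$, applied entry-wise. The whole argument then rests on one technical fact about the asterate, which I would isolate first: if $\mathop\mathrm{Tr}(\bm{A})\leq\mathbb{1}$, then $\bm{A}^{m}\leq\bm{A}^{\ast}$ for every $m\geq0$, and in particular $\bm{A}\bm{A}^{\ast}\leq\bm{A}^{\ast}$. To see this I would read the entries of $\bm{A}^{m}$ as maximal weights of walks of length $m$ in the weighted digraph of $\bm{A}$, and note that $\mathop\mathrm{Tr}(\bm{A})\leq\mathbb{1}$ forces $\mathop\mathrm{tr}(\bm{A}^{k})\leq\mathbb{1}$ for all $k\leq n$, hence every simple cycle, and therefore every closed walk, has weight at most $\mathbb{1}$. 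Any walk of length $m\geq n$ repeats a vertex and thus contains a cycle; since the excised cycle has weight $\leq\mathbb{1}$, the shortened walk (with the same endpoints) has weight at least that of the original, so the original weight is dominated by that of a walk of length at most $n-1$. This yields $\bm{A}^{m}\leq\bm{A}^{\ast}$.

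With this in hand, statement~2 is the shortest part, and I would prove it by contraposition. Suppose a regular $\bm{x}$ solves \eqref{I-Axbx}; then $\bm{A}\bm{x}\leq\bm{x}$ gives $a_{ij}x_{j}\leq x_{i}$, and since $\bm{x}$ is regular we may invert to get $a_{ij}\leq x_{i}x_{j}^{-1}$ for all $i,j$. Multiplying these inequalities around any cycle $i_{1}\to i_{2}\to\cdots\to i_{k}\to i_{1}$ telescopes the right-hand side to $\mathbb{1}$, so every cyclic product, and hence every $\mathop\mathrm{tr}(\bm{A}^{k})$, is at most $\mathbb{1}$; thus $\mathop\mathrm{Tr}(\bm{A})\leq\mathbb{1}$. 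The contrapositive is exactly statement~2: if $\mathop\mathrm{Tr}(\bm{A})>\mathbb{1}$, no regular solution can exist.

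For statement~1 I would argue both inclusions. For necessity, let $\bm{x}$ be any regular solution; iterating $\bm{A}\bm{x}\leq\bm{x}$ gives $\bm{A}^{k}\bm{x}\leq\bm{x}$ for all $k\geq0$, so $\bm{A}^{\ast}\bm{x}=\bigoplus_{k=0}^{n-1}\bm{A}^{k}\bm{x}\leq\bm{x}$, while the $k=0$ term forces $\bm{A}^{\ast}\bm{x}\geq\bm{x}$; hence $\bm{x}=\bm{A}^{\ast}\bm{x}$. Taking $\bm{u}=\bm{x}$, which is regular and satisfies $\bm{u}\geq\bm{b}$ because $\bm{b}\leq\bm{x}$, exhibits $\bm{x}$ in the required form; note that this direction needs no hypothesis on $\mathop\mathrm{Tr}(\bm{A})$. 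For sufficiency, let $\bm{u}$ be regular with $\bm{u}\geq\bm{b}$ and set $\bm{x}=\bm{A}^{\ast}\bm{u}$. Then $\bm{x}$ is regular since $\bm{A}^{\ast}\geq\bm{I}$ gives $\bm{x}\geq\bm{u}$; moreover $\bm{A}\bm{x}=\bm{A}\bm{A}^{\ast}\bm{u}\leq\bm{A}^{\ast}\bm{u}=\bm{x}$ by the isolated fact, and $\bm{b}\leq\bm{u}\leq\bm{A}^{\ast}\bm{u}=\bm{x}$, so $\bm{A}\bm{x}\oplus\bm{b}\leq\bm{x}$.

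The step I expect to be the main obstacle is the isolated asterate fact $\bm{A}^{m}\leq\bm{A}^{\ast}$ under $\mathop\mathrm{Tr}(\bm{A})\leq\mathbb{1}$, since it is what connects the purely combinatorial trace condition to the truncation (convergence) of $\bm{A}^{\ast}$; everything else is a short rearrangement. If a purely algebraic route were preferred to the walk-excision argument, I would instead derive it from the binomial identity \eqref{E-trABk} and the definition of $\mathop\mathrm{Tr}$, but the graph-theoretic reading seems the most transparent.
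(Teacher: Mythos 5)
The paper does not actually prove Theorem~\ref{T-Axbx}; it imports the statement from \cite{Krivulin2013Amultidimensional} without argument, so there is no in-paper proof to match step by step. Judged on its own, your proof is correct and complete: the splitting of $\bm{A}\bm{x}\oplus\bm{b}\leq\bm{x}$ into $\bm{A}\bm{x}\leq\bm{x}$ and $\bm{b}\leq\bm{x}$, the telescoping-cycle argument for part~2, the fixed-point identity $\bm{A}^{\ast}\bm{x}=\bm{x}$ for necessity, and the reduction of sufficiency to the absorption property $\bm{A}\bm{A}^{\ast}\leq\bm{A}^{\ast}$ are exactly the standard skeleton used in the cited source. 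You correctly isolate the only nontrivial ingredient, $\bm{A}^{m}\leq\bm{A}^{\ast}$ under $\mathop\mathrm{Tr}(\bm{A})\leq\mathbb{1}$, and your walk-excision justification is sound: the condition $\mathop\mathrm{tr}(\bm{A}^{k})\leq\mathbb{1}$ for $k\leq n$ bounds every simple cycle, hence every closed walk (as a product of simple-cycle weights each at most $\mathbb{1}$), so excising a closed subwalk can only increase the weight and every walk is dominated by one of length at most $n-1$. The cited reference establishes this same inequality by purely algebraic manipulation of the powers of $\bm{A}$ rather than via the digraph interpretation, but the two routes are equivalent in content; your remark that the necessity direction needs no hypothesis on $\mathop\mathrm{Tr}(\bm{A})$ is a correct and worthwhile observation that the statement itself leaves implicit.
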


To conclude this section, we present a solution to a system that combines inequality \eqref{I-Axbx} with an upper bound on the vector $\bm{x}$ in the form
\begin{equation}
\begin{aligned}
\bm{A}\bm{x}
\oplus
\bm{b}
&\leq
\bm{x},
\\
\bm{x}
&\leq
\bm{d}.
\end{aligned}
\label{I-Axbx-xd}
\end{equation}

By application of both Lemma~\ref{L-IxdA} and Theorem~\ref{T-Axbx}, we arrive at the next solution, which is also a direct consequence of the result obtained in \cite{Krivulin2014Aconstrained} for a slightly more general system.
\begin{lemma}
\label{L-Axbx-xd}
For any matrix $\bm{A}$, vector $\bm{b}$ and regular vector $\bm{d}$, we denote $\Delta=\mathop\mathrm{Tr}(\bm{A})\oplus\bm{d}^{-}\bm{A}^{\ast}\bm{b}$. Then, the following statements hold:
\begin{enumerate}
\item If $\Delta\leq\mathbb{1}$, then all regular solutions to system \eqref{I-Axbx-xd} are given by $\bm{x}=\bm{A}^{\ast}\bm{u}$, where $\bm{u}$ is a regular vector such that $\bm{b}\leq\bm{u}\leq(\bm{d}^{-}\bm{A}^{\ast})^{-}$.
\item If $\Delta>\mathbb{1}$, then there is no regular solution.
\end{enumerate}
\end{lemma}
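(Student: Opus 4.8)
The plan is to combine the two known results, Lemma~\ref{L-IxdA} and Theorem~\ref{T-Axbx}, by solving the first inequality $\bm{A}\bm{x}\oplus\bm{b}\leq\bm{x}$ in full generality and then imposing the box constraint $\bm{x}\leq\bm{d}$ on the resulting parametric family. By Theorem~\ref{T-Axbx}, a regular solution to the first inequality exists precisely when $\mathop\mathrm{Tr}(\bm{A})\leq\mathbb{1}$, in which case every regular solution has the form $\bm{x}=\bm{A}^{\ast}\bm{u}$ with $\bm{u}$ regular and $\bm{u}\geq\bm{b}$. So the first step is to substitute this representation into the remaining constraint $\bm{x}\leq\bm{d}$, reducing the whole system to finding all regular $\bm{u}\geq\bm{b}$ that additionally satisfy $\bm{A}^{\ast}\bm{u}\leq\bm{d}$.

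Next I would apply Lemma~\ref{L-IxdA} to the inequality $\bm{A}^{\ast}\bm{u}\leq\bm{d}$, viewing $\bm{A}^{\ast}$ as the coefficient matrix. Since $\bm{A}^{\ast}=\bm{I}\oplus\bm{A}\oplus\cdots\oplus\bm{A}^{n-1}\geq\bm{I}$, the matrix $\bm{A}^{\ast}$ is automatically column-regular, so the hypotheses of Lemma~\ref{L-IxdA} are met and the inequality is equivalent to $\bm{u}\leq(\bm{d}^{-}\bm{A}^{\ast})^{-}$. Combining this upper bound with the lower bound $\bm{u}\geq\bm{b}$ gives the two-sided condition $\bm{b}\leq\bm{u}\leq(\bm{d}^{-}\bm{A}^{\ast})^{-}$ that appears in statement~1. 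This shows that, whenever solutions exist, they are exactly as claimed; what remains is to pin down the precise existence criterion in terms of the scalar $\Delta=\mathop\mathrm{Tr}(\bm{A})\oplus\bm{d}^{-}\bm{A}^{\ast}\bm{b}$.

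The existence analysis is where the two conditions interact, and I expect this to be the main obstacle. There are two ways a solution can fail: either $\mathop\mathrm{Tr}(\bm{A})>\mathbb{1}$, so that the first inequality alone has no regular solution, or the admissible interval for $\bm{u}$ is empty because the lower bound exceeds the upper bound. The key observation is that the interval $\bm{b}\leq\bm{u}\leq(\bm{d}^{-}\bm{A}^{\ast})^{-}$ is nonempty (and contains a regular vector) if and only if $\bm{b}\leq(\bm{d}^{-}\bm{A}^{\ast})^{-}$; by the conjugate-transposition properties stated in the excerpt, together with the identity $\bm{x}^{-}\bm{x}=\mathbb{1}$, this inequality can be rewritten via an application of the conjugation rules as the scalar condition $\bm{d}^{-}\bm{A}^{\ast}\bm{b}\leq\mathbb{1}$. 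The delicate point is to verify that $\bm{u}=\bm{b}$ is itself regular and admissible so that nonemptiness genuinely yields a regular solution, rather than merely a formal interval.

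Finally, I would merge the two scalar conditions. Statement~1 asks for $\Delta\leq\mathbb{1}$, i.e. $\mathop\mathrm{Tr}(\bm{A})\leq\mathbb{1}$ \emph{and} $\bm{d}^{-}\bm{A}^{\ast}\bm{b}\leq\mathbb{1}$ simultaneously, using that $x\oplus y\leq\mathbb{1}$ is equivalent to $x\leq\mathbb{1}$ and $y\leq\mathbb{1}$. Under $\Delta\leq\mathbb{1}$ both obstructions vanish, so the solution set is exactly $\{\bm{A}^{\ast}\bm{u}:\bm{b}\leq\bm{u}\leq(\bm{d}^{-}\bm{A}^{\ast})^{-},\ \bm{u}\text{ regular}\}$; whereas if $\Delta>\mathbb{1}$, then at least one of the two necessary conditions is violated and no regular solution can exist. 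This case split establishes both statements and completes the proof.
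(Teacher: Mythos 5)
Your overall route is exactly the one the paper intends: it states the lemma as a direct combination of Lemma~\ref{L-IxdA} and Theorem~\ref{T-Axbx} (plus a citation), and your reduction --- solve $\bm{A}\bm{x}\oplus\bm{b}\leq\bm{x}$ via Theorem~\ref{T-Axbx}, substitute $\bm{x}=\bm{A}^{\ast}\bm{u}$ into $\bm{x}\leq\bm{d}$, apply Lemma~\ref{L-IxdA} to $\bm{A}^{\ast}\bm{u}\leq\bm{d}$ using column-regularity of $\bm{A}^{\ast}\geq\bm{I}$, and split $\Delta\leq\mathbb{1}$ into the two scalar conditions --- is the same argument spelled out.

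One step as written would fail, though it is easily repaired: you propose to certify that the interval $\bm{b}\leq\bm{u}\leq(\bm{d}^{-}\bm{A}^{\ast})^{-}$ contains a \emph{regular} vector by checking that $\bm{u}=\bm{b}$ is regular. The lemma only assumes $\bm{b}$ is a vector, so it may have zero entries (indeed $\bm{b}=\bm{0}$ is a legitimate and useful case, recovering Theorem~\ref{T-Axbx} with an upper bound), and then $\bm{u}=\bm{b}$ is not an admissible witness. The correct witness is the upper endpoint $\bm{u}=(\bm{d}^{-}\bm{A}^{\ast})^{-}$: since $\bm{d}$ is regular and $\bm{A}^{\ast}\geq\bm{I}$, the row vector $\bm{d}^{-}\bm{A}^{\ast}$ is regular, hence so is its conjugate, and it lies in the interval precisely when $\bm{b}\leq(\bm{d}^{-}\bm{A}^{\ast})^{-}$, i.e.\ when $\bm{d}^{-}\bm{A}^{\ast}\bm{b}\leq\mathbb{1}$. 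With that substitution your case analysis for $\Delta\leq\mathbb{1}$ versus $\Delta>\mathbb{1}$ goes through and the proof is complete.
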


\section{Solution to optimization problems}
\label{S-SOP}

In this section, we consider optimization problems involving the function $\bm{x}^{-}\bm{A}\bm{x}$, where $\bm{A}$ is a given matrix, and $\bm{x}$ is the unknown vector. The unconstrained minimization of this function is examined by different methods in various application contexts \cite{Cuninghamegreen1962Describing,Engel1975Diagonal,Superville1978Various,Cuninghamegreen1979Minimax,Elsner2004Maxalgebra,Elsner2010Maxalgebra}. Complete solutions to some constrained problems are proposed in \cite{Krivulin2014Aconstrained,Krivulin2015Amultidimensional,Krivulin2015Extremal} 

We present examples of both unconstrained and constrained problems, and then formulate and solve a new general constrained optimization problem. As a consequence, we offer solutions for some new special cases of the general problem.

The results are given in the context of an arbitrary idempotent semifield in a common form, which can be readily interpreted in terms of particular semifields. Specifically, for the semifield $\mathbb{R}_{\max,+}$, we replace $\oplus$ by $\max$ and $\otimes$ by $+$, and use the relation symbol $\leq$ in the usual sense. In the framework of $\mathbb{R}_{\min,\times}$, we put $\oplus=\min$ and $\otimes=\times$, and understand the symbol $\leq$ to indicate the order, which is opposite to the standard linear order on $\mathbb{R}$.

\subsection{Examples of optimization problems}

We start with an unconstrained problem that has the objective function written in a basic form. Given a matrix $\bm{A}\in\mathbb{X}^{n\times n}$, consider the problem to find regular vectors $\bm{x}\in\mathbb{X}^{n}$ that
\begin{equation}
\begin{aligned}
&
\text{minimize}
&&
\bm{x}^{-}\bm{A}\bm{x},
\end{aligned}
\label{P-minxAx}
\end{equation}

A solution to the problem can be provided by several ways (see, e.g., \cite{Krivulin2014Aconstrained,Krivulin2015Amultidimensional,Krivulin2015Extremal}), and takes the following form.
\begin{lemma}\label{L-minxAx}
Let $\bm{A}$ be a matrix with spectral radius $\lambda>\mathbb{0}$. Then, the minimum value in problem \eqref{P-minxAx} is equal to $\lambda$, and all regular solutions are given by
$$
\bm{x}
=
(\lambda^{-1}\bm{A})^{\ast}\bm{u},
\qquad
\bm{u}\in\mathbb{X}^{n}.
$$
\end{lemma}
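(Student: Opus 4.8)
The plan is to follow the parametric reduction advertised in the introduction. I would introduce a scalar parameter $\theta$ intended to bound the objective, and study the inequality $\bm{x}^{-}\bm{A}\bm{x}\leq\theta$ over regular $\bm{x}$, seeking the least $\theta$ for which it is solvable. The first and only delicate step is to linearize this inequality. Using the properties of conjugate transposition recorded in Section~\ref{S-BDNO}, namely $\bm{x}\bm{x}^{-}\geq\bm{I}$ and $\bm{x}^{-}\bm{x}=\mathbb{1}$ for regular $\bm{x}$, I would show that $\bm{x}^{-}\bm{A}\bm{x}\leq\theta$ is equivalent to the linear inequality $\bm{A}\bm{x}\leq\theta\bm{x}$. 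The forward direction follows from $\bm{A}\bm{x}\leq(\bm{x}\bm{x}^{-})\bm{A}\bm{x}=\bm{x}(\bm{x}^{-}\bm{A}\bm{x})\leq\theta\bm{x}$ by monotonicity; the reverse by left-multiplying $\bm{A}\bm{x}\leq\theta\bm{x}$ with $\bm{x}^{-}$ and applying $\bm{x}^{-}\bm{x}=\mathbb{1}$. Since $\lambda>\mathbb{0}$ we may restrict attention to invertible $\theta$, so the inequality rewrites as $\theta^{-1}\bm{A}\bm{x}\leq\bm{x}$, which is exactly \eqref{I-Axbx} for the matrix $\theta^{-1}\bm{A}$ and the zero vector $\bm{b}=\mathbb{0}$.

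Next I would invoke Theorem~\ref{T-Axbx}, which guarantees a regular solution precisely when $\mathop\mathrm{Tr}(\theta^{-1}\bm{A})\leq\mathbb{1}$. Expanding this quantity with the trace property $\mathop\mathrm{tr}(x\bm{A})=x\mathop\mathrm{tr}\bm{A}$ gives $\mathop\mathrm{Tr}(\theta^{-1}\bm{A})=\bigoplus_{k=1}^{n}\theta^{-k}\mathop\mathrm{tr}\bm{A}^{k}$, and since a tropical sum is bounded by $\mathbb{1}$ if and only if each summand is, the existence condition becomes $\theta^{-k}\mathop\mathrm{tr}\bm{A}^{k}\leq\mathbb{1}$, equivalently $\mathop\mathrm{tr}\nolimits^{1/k}(\bm{A}^{k})\leq\theta$, for every $k=1,\dots,n$. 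Taking the join over $k$ reproduces exactly the spectral radius formula, so the inequality is solvable precisely when $\theta\geq\lambda$. Hence the least admissible value of $\theta$ is $\lambda$, which is therefore the minimum of the objective; this minimum is attained, since a regular solution exists already at $\theta=\lambda$.

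Finally, setting $\theta=\lambda$ and reading off the solution part of Theorem~\ref{T-Axbx} with $\bm{b}=\mathbb{0}$ delivers $\bm{x}=(\lambda^{-1}\bm{A})^{\ast}\bm{u}$, the claimed description of all regular solutions. The main obstacle is really confined to the first step: establishing the equivalence between the nonlinear inequality $\bm{x}^{-}\bm{A}\bm{x}\leq\theta$ and the linear one $\theta^{-1}\bm{A}\bm{x}\leq\bm{x}$, after which the argument is a direct application of Theorem~\ref{T-Axbx} together with a routine evaluation of $\mathop\mathrm{Tr}$. I would also take some care with regularity, checking that the regular vectors $\bm{u}$ admitted by Theorem~\ref{T-Axbx} (with the constraint $\bm{u}\geq\bm{b}=\mathbb{0}$ being vacuous) generate exactly the stated family $\bm{x}=(\lambda^{-1}\bm{A})^{\ast}\bm{u}$.
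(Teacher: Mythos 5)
Your proof is correct, and it is essentially the paper's own method: the paper does not prove Lemma~\ref{L-minxAx} in-text (it is cited from earlier work), but your parametric reduction --- linearizing $\bm{x}^{-}\bm{A}\bm{x}\leq\theta$ to $\theta^{-1}\bm{A}\bm{x}\leq\bm{x}$ via the conjugate-transposition properties, then applying Theorem~\ref{T-Axbx} with $\bm{b}=\bm{0}$ and reading off $\theta\geq\lambda$ from $\mathop\mathrm{Tr}(\theta^{-1}\bm{A})\leq\mathbb{1}$ --- is exactly the argument the paper runs in the proof of Theorem~\ref{T-xAxxpqxr-Bxgx-xh}, specialized to the unconstrained case. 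The only point worth tightening is the remark that $\theta$ may be taken invertible: one should note that $\theta=\bm{x}^{-}\bm{A}\bm{x}=\mathbb{0}$ for regular $\bm{x}$ would force $\bm{A}=\bm{0}$ and hence $\lambda=\mathbb{0}$, contradicting the hypothesis.
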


Some extensions of problem \eqref{P-minxAx} were examined in \cite{Krivulin2014Aconstrained,Krivulin2015Amultidimensional,Krivulin2015Extremal}, where more general forms of the objective function are considered and/or further inequality constraints are added. Specifically, a problem with an extended function is solved in \cite{Krivulin2015Extremal}. Given a matrix $\bm{A}\in\mathbb{X}^{n\times n}$, vectors $\bm{p},\bm{q}\in\mathbb{X}^{n}$, and a scalar $r\in\mathbb{X}$, the problem is to obtain regular $\bm{x}\in\mathbb{X}^{n}$ that
\begin{equation}
\begin{aligned}
&
\text{minimize}
&&
\bm{x}^{-}\bm{A}\bm{x}\oplus\bm{x}^{-}\bm{p}\oplus\bm{q}^{-}\bm{x}\oplus r.
\end{aligned}
\label{P-xAxxpqxr}
\end{equation}

A complete direct solution to the problem is as follows.
\begin{theorem}\label{T-xAxxpqxr}
Let $\bm{A}$ be a matrix with spectral radius $\lambda>\mathbb{0}$, and $\bm{q}$ be a regular vector. Then, the minimum value in problem \eqref{P-xAxxpqxr} is equal to
\begin{equation*}
\mu
=
\lambda
\oplus
\bigoplus_{m=0}^{n-1}
(\bm{q}^{-}\bm{A}^{m}\bm{p})^{1/(m+2)}
\oplus
r,
\end{equation*}
and all regular solutions are given by
\begin{equation*}
\bm{x}
=
(\mu^{-1}\bm{A})^{\ast}\bm{u},
\qquad
\mu^{-1}\bm{p}
\leq
\bm{u}
\leq
\mu(\bm{q}^{-}(\mu^{-1}\bm{A})^{\ast})^{-}.
\end{equation*}
\end{theorem}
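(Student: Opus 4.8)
introduce a scalar parameter $\mu$ to represent the values of the objective function, reduce the optimization problem to a parametrized vector inequality, extract the minimum value from the solvability condition of that inequality, and finally describe the complete solution set. Concretely, for a regular vector $\bm{x}$, I would observe that $\bm{x}^{-}\bm{A}\bm{x}\oplus\bm{x}^{-}\bm{p}\oplus\bm{q}^{-}\bm{x}\oplus r\leq\mu$ holds if and only if all four terms are bounded by $\mu$ (using the fact that $a\oplus b\leq c$ is equivalent to $a\leq c$ and $b\leq c$). This splits into $\bm{x}^{-}\bm{A}\bm{x}\leq\mu$, $\bm{x}^{-}\bm{p}\leq\mu$, $\bm{q}^{-}\bm{x}\leq\mu$, and $r\leq\mu$. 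The condition $r\leq\mu$ is trivial, and the others can be manipulated into a single inequality of the form $\bm{A}\bm{x}\oplus\bm{b}\leq\bm{x}$ combined with an upper bound $\bm{x}\leq\bm{d}$, i.e.\ exactly the system \eqref{I-Axbx-xd} handled by Lemma~\ref{L-Axbx-xd}.

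To carry out that reduction, I would first rewrite $\bm{x}^{-}\bm{A}\bm{x}\leq\mu$ as $\mu^{-1}\bm{A}\bm{x}\leq\bm{x}$, using that $\bm{x}\bm{x}^{-}\geq\bm{I}$ and the conjugation properties to convert the scalar inequality into a vector inequality. Similarly, $\bm{x}^{-}\bm{p}\leq\mu$ becomes $\mu^{-1}\bm{p}\leq\bm{x}$, which I would fold into the $\bm{b}$ term, so that the first inequality of the system reads $\mu^{-1}\bm{A}\bm{x}\oplus\mu^{-1}\bm{p}\leq\bm{x}$. The remaining constraint $\bm{q}^{-}\bm{x}\leq\mu$ is an instance of \eqref{I-Axd} and, by Lemma~\ref{L-IxdA}, is equivalent to the upper bound $\bm{x}\leq(\mu^{-1}\bm{q}^{-})^{-}=\mu(\bm{q}^{-})^{-}$, giving the $\bm{x}\leq\bm{d}$ part with $\bm{d}=\mu(\bm{q}^{-})^{-}$. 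Thus with $\bm{A}\mapsto\mu^{-1}\bm{A}$, $\bm{b}=\mu^{-1}\bm{p}$, and $\bm{d}=\mu(\bm{q}^{-})^{-}$, the whole feasibility question becomes an application of Lemma~\ref{L-Axbx-xd}.

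Lemma~\ref{L-Axbx-xd} then tells me a regular solution exists precisely when $\Delta=\mathop\mathrm{Tr}(\mu^{-1}\bm{A})\oplus\bm{d}^{-}(\mu^{-1}\bm{A})^{\ast}\bm{b}\leq\mathbb{1}$. The minimum $\mu$ is the smallest value making this hold. I would expand $\mathop\mathrm{Tr}(\mu^{-1}\bm{A})=\bigoplus_{k=1}^{n}\mu^{-k}\mathop\mathrm{tr}\bm{A}^{k}$, whose bound $\leq\mathbb{1}$ yields $\mu\geq\mathop\mathrm{tr}^{1/k}(\bm{A}^{k})$ for each $k$, i.e.\ $\mu\geq\lambda$ via the spectral-radius formula. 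For the second part of $\Delta$, substituting $\bm{d}^{-}=\mu^{-1}\bm{q}^{-}$ and $\bm{b}=\mu^{-1}\bm{p}$ gives $\mu^{-2}\bm{q}^{-}(\mu^{-1}\bm{A})^{\ast}\bm{p}=\bigoplus_{m=0}^{n-1}\mu^{-(m+2)}\bm{q}^{-}\bm{A}^{m}\bm{p}$, whose bound $\leq\mathbb{1}$ produces $\mu\geq(\bm{q}^{-}\bm{A}^{m}\bm{p})^{1/(m+2)}$ for each $m$. Combining these with $\mu\geq r$ yields exactly the stated formula for $\mu$. Finally, substituting this minimal $\mu$ back into the solution $\bm{x}=(\mu^{-1}\bm{A})^{\ast}\bm{u}$ with $\mu^{-1}\bm{p}\leq\bm{u}\leq(\bm{d}^{-}(\mu^{-1}\bm{A})^{\ast})^{-}=\mu(\bm{q}^{-}(\mu^{-1}\bm{A})^{\ast})^{-}$ gives the claimed solution set.

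\textbf{The main obstacle} I anticipate is the bookkeeping in showing that $\Delta\leq\mathbb{1}$ is \emph{equivalent} to $\mu\geq\mu_{\min}$ (the stated value), rather than merely sufficient. Monotonicity in $\mu$ is intuitively clear---larger $\mu$ shrinks each $\mu^{-k}$ and $\mu^{-(m+2)}$ term---but I must verify carefully that the Kleene star $(\mu^{-1}\bm{A})^{\ast}$ truncates at the $(n-1)$st power (so that $\mu^{-(m+2)}$ ranges only over $0\leq m\leq n-1$) and that no higher-degree contributions from $\mathop\mathrm{Tr}$ or from the expansion of $\bm{A}^{\ast}$ are lost. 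Here the binomial identities \eqref{E-ABk} and \eqref{E-trABk} are presumably the tool for re-expanding $\mathop\mathrm{Tr}((\mu^{-1}\bm{A})^{\ast})$-type expressions and confirming that the exponents collapse exactly into the finite ranges appearing in the formula. The delicate point is ensuring the equivalence direction, i.e.\ that at $\mu=\mu_{\min}$ the condition $\Delta\leq\mathbb{1}$ is in fact attained (with equality in at least one term), so that the minimum is genuinely achieved and the solution set is nonempty.
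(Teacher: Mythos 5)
Your proposal is correct and follows essentially the same route as the paper: the paper states this theorem by citation, but its proof of the general Theorem~\ref{T-xAxxpqxr-Bxgx-xh} uses exactly your scheme (parametrize by the optimal value, split the objective inequality into four scalar/vector inequalities, reduce to system \eqref{I-Axbx-xd}, and read the minimum off the solvability condition $\Delta\leq\mathbb{1}$ of Lemma~\ref{L-Axbx-xd}), and your theorem is the special case $\bm{B}=\bm{0}$, $\bm{g}=\bm{0}$, $\bm{h}^{-}=\bm{0}^{T}$. The "obstacle" you flag is handled as in the paper: each term of $\Delta\leq\mathbb{1}$ is individually equivalent to a lower bound on $\mu$ (one only needs $\mu>\mathbb{0}$, guaranteed by the prior bound $\mu\geq\lambda\oplus(\bm{q}^{-}\bm{p})^{1/2}\oplus r>\mathbb{0}$), so the condition holds exactly for $\mu$ at or above the stated value and the minimum is attained.
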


Suppose now that, given matrices $\bm{A},\bm{B}\in\mathbb{X}^{n\times n}$, and a vector $\bm{g}\in\mathbb{X}^{n}$, we need to find regular solutions $\bm{x}\in\mathbb{X}^{n}$ to the problem
\begin{equation}
\begin{aligned}
&
\text{minimize}
&&
\bm{x}^{-}\bm{A}\bm{x},
\\
&
\text{subject to}
&&
\bm{B}\bm{x}\oplus\bm{g}
\leq
\bm{x}.
\end{aligned}
\label{P-xAxBxgx}
\end{equation}

The next complete solution to the problem is provided in \cite{Krivulin2015Amultidimensional}.

\begin{theorem}\label{T-xAxBxgx}
Let $\bm{A}$ be a matrix with spectral radius $\lambda>\mathbb{0}$, and $\bm{B}$ a matrix with $\mathop\mathrm{Tr}(\bm{B})\leq\mathbb{1}$. Then, the minimum value in problem \eqref{P-xAxBxgx} is equal to
\begin{equation*}
\mu
=
\lambda
\oplus
\bigoplus_{k=1}^{n-1}\mathop{\bigoplus\hspace{1.2em}}_{1\leq i_{1}+\cdots+i_{k}\leq n-k}\mathop\mathrm{tr}\nolimits^{1/k}(\bm{A}\bm{B}^{i_{1}}\cdots\bm{A}\bm{B}^{i_{k}}),
\end{equation*}
and all regular solutions are given by
\begin{equation*}
\bm{x}
=
(\mu^{-1}\bm{A}\oplus\bm{B})^{\ast}\bm{u},
\qquad
\bm{u}
\geq
\bm{g}.
\end{equation*}
\end{theorem}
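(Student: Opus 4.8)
The plan is to follow the parametrization approach already signalled in the introduction. First I would introduce a scalar parameter $\mu$ to represent an upper bound on the objective, writing $\bm{x}^{-}\bm{A}\bm{x}\leq\mu$. Since $\bm{x}$ is regular and $\mu>\mathbb{0}$, this scalar inequality is equivalent to the vector inequality $\bm{A}\bm{x}\leq\mu\bm{x}$, which rearranges to $\mu^{-1}\bm{A}\bm{x}\leq\bm{x}$. Combining this with the given constraint $\bm{B}\bm{x}\oplus\bm{g}\leq\bm{x}$ and using the fact that $u\oplus v\leq w$ is equivalent to $u\leq w$ and $v\leq w$, the whole problem reduces to solving the single inequality
\begin{equation*}
(\mu^{-1}\bm{A}\oplus\bm{B})\bm{x}\oplus\bm{g}
\leq
\bm{x}.
\end{equation*}
This is precisely an instance of \eqref{I-Axbx} with matrix $\bm{A}^{\prime}=\mu^{-1}\bm{A}\oplus\bm{B}$ and vector $\bm{b}=\bm{g}$, so Theorem~\ref{T-Axbx} applies directly.

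Next I would invoke Theorem~\ref{T-Axbx}: a regular solution exists if and only if $\mathop\mathrm{Tr}(\mu^{-1}\bm{A}\oplus\bm{B})\leq\mathbb{1}$, and in that case the solutions are exactly $\bm{x}=(\mu^{-1}\bm{A}\oplus\bm{B})^{\ast}\bm{u}$ with $\bm{u}\geq\bm{g}$. This immediately delivers the claimed solution set once the correct $\mu$ is identified. The minimum value $\mu$ is then characterized as the least scalar for which the existence condition $\mathop\mathrm{Tr}(\mu^{-1}\bm{A}\oplus\bm{B})\leq\mathbb{1}$ holds, since any feasible objective value is an admissible $\mu$ and conversely.

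The heart of the argument, and the step I expect to be the main obstacle, is converting the implicit condition $\mathop\mathrm{Tr}(\mu^{-1}\bm{A}\oplus\bm{B})\leq\mathbb{1}$ into the explicit closed-form expression for $\mu$. Here I would expand $\mathop\mathrm{Tr}(\mu^{-1}\bm{A}\oplus\bm{B})=\bigoplus_{k=1}^{n}\mathop\mathrm{tr}(\mu^{-1}\bm{A}\oplus\bm{B})^{k}$ using the trace binomial identity \eqref{E-trABk}. Since $\mu^{-1}$ is a scalar, it factors out of each product according to the number of occurrences of $\bm{A}$: a term with $j$ copies of $\bm{A}$ inside a trace of a $k$-fold product contributes $\mu^{-j}\mathop\mathrm{tr}(\bm{A}\bm{B}^{i_1}\cdots\bm{A}\bm{B}^{i_k})$ and so on. Grouping terms by the count of $\bm{A}$-factors, and separately handling the pure-$\bm{B}$ terms $\mathop\mathrm{tr}\bm{B}^{k}$ which are all $\leq\mathbb{1}$ by the hypothesis $\mathop\mathrm{Tr}(\bm{B})\leq\mathbb{1}$, the condition $\mathop\mathrm{Tr}(\mu^{-1}\bm{A}\oplus\bm{B})\leq\mathbb{1}$ becomes a finite conjunction of scalar inequalities of the form $\mu^{-k}\mathop\mathrm{tr}(\bm{A}\bm{B}^{i_1}\cdots\bm{A}\bm{B}^{i_k})\leq\mathbb{1}$, i.e. $\mathop\mathrm{tr}^{1/k}(\bm{A}\bm{B}^{i_1}\cdots\bm{A}\bm{B}^{i_k})\leq\mu$. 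Taking the join over all such terms, together with the spectral-radius contribution $\lambda$ coming from the $\bm{B}=\bm{0}$ diagonal terms, yields exactly the stated $\mu$. The bookkeeping over the multi-indices $i_1+\cdots+i_k$ and the verification that each exponent ranges only up to $n-k$ (so that the sum truncates correctly, using that $\mathop\mathrm{Tr}$ involves only powers up to $n$) is the delicate combinatorial part; I would rely on \eqref{E-trABk} to make this transparent rather than recomputing the expansion by hand.
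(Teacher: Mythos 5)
Your proposal is correct and is essentially the argument the paper relies on: this theorem is quoted from the literature, and the paper's own proof of its generalization (Theorem~\ref{T-xAxxpqxr-Bxgx-xh}) proceeds by exactly your steps --- parametrizing the objective value, reducing to an instance of \eqref{I-Axbx} solved by Theorem~\ref{T-Axbx}, and expanding $\mathop\mathrm{Tr}(\mu^{-1}\bm{A}\oplus\bm{B})$ via \eqref{E-trABk} to extract the explicit lower bound on $\mu$. The only point worth making explicit is that $\mu\geq\bm{x}^{-}\bm{A}\bm{x}\geq\lambda>\mathbb{0}$ (by Lemma~\ref{L-minxAx}), which justifies forming $\mu^{-1}$ before the reduction.
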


Below, we offer a solution to a new problem that combine the objective function in \eqref{P-xAxxpqxr} with the extended set of constraints in \eqref{I-Axbx-xd}.

\subsection{New constrained optimization problem}
\label{S-NCOP}

This section includes a complete solution to a constrained problem, which presents an extended version of the problems considered above. We follow the approach developed in \cite{Krivulin2014Aconstrained,Krivulin2015Amultidimensional,Krivulin2015Extremal} to introduce an additional variable, which represents the minimum value of the objective function, and then to reduce the problem to an inequality, where the new variable plays the role of a parameter. 

Suppose that, given matrices $\bm{A},\bm{B}\in\mathbb{X}^{n\times n}$, vectors $\bm{p},\bm{q},\bm{g},\bm{h}\in\mathbb{X}^{n}$, and a scalar $r\in\mathbb{X}$, the problem is to find regular vectors $\bm{x}\in\mathbb{X}^{n}$ that
\begin{equation}
\begin{aligned}
&
\text{minimize}
&&
\bm{x}^{-}\bm{A}\bm{x}\oplus\bm{x}^{-}\bm{p}\oplus\bm{q}^{-}\bm{x}\oplus r,
\\
&
\text{subject to}
&&
\bm{B}\bm{x}\oplus\bm{g}
\leq
\bm{x},
\\
&&&
\bm{x}
\leq
\bm{h}.
\end{aligned}
\label{P-xAxxpqxr-Bxgx-xh}
\end{equation}

We start with some general remarks and useful notation. It immediately follows from Lemma~\ref{L-Axbx-xd} that the inequality constraints in \eqref{P-xAxxpqxr-Bxgx-xh} have regular solutions if and only if the condition $\mathop\mathrm{Tr}(\bm{B})\oplus\bm{h}^{-}\bm{B}^{\ast}\bm{g}\leq\mathbb{1}$ holds, which is itself equivalent to the two conditions $\mathop\mathrm{Tr}(\bm{B})\leq\mathbb{1}$ and $\bm{h}^{-}\bm{B}^{\ast}\bm{g}\leq\mathbb{1}$.

Clearly, the constraints can be rearranged to provide another representation of the problem in the form
\begin{equation}
\begin{aligned}
&
\text{minimize}
&&
\bm{x}^{-}\bm{A}\bm{x}\oplus\bm{x}^{-}\bm{p}\oplus\bm{q}^{-}\bm{x}\oplus r,
\\
&
\text{subject to}
&&
\bm{B}\bm{x}
\leq
\bm{x},
\\
&&&
\bm{g}
\leq
\bm{x}
\leq
\bm{h}.
\end{aligned}
\label{P-xAxxpqxr-Bxx-gxh}
\end{equation}

To describe the solution in a compact form, we introduce an auxiliary notation for large matrix sums. First, we define the matrices $\bm{S}_{0}=\bm{I}$ and
\begin{equation}
\bm{S}_{k}
=
\mathop{\bigoplus\hspace{1.2em}}_{0\leq i_{1}+\cdots+i_{k}\leq n-k}\bm{A}\bm{B}^{i_{1}}\cdots\bm{A}\bm{B}^{i_{k}},
\qquad
k=1,\ldots,n;
\label{E-Sk}
\end{equation}
and note that they satisfy the inequality $\bm{S}_{k}\geq\bm{A}^{k}$.

For a different type of sums, we introduce the notation $\bm{T}_{0}=\bm{B}^{\ast}$ and
\begin{equation}
\bm{T}_{k}
=
\mathop{\bigoplus\hspace{1.0em}}_{0\leq i_{0}+i_{1}+\cdots+i_{k}\leq n-k-1}
\bm{B}^{i_{0}}(\bm{A}\bm{B}^{i_{1}}\cdots\bm{A}\bm{B}^{i_{k}}),
\qquad
k=1,\ldots,n-1.
\label{E-Tk}
\end{equation}

It is easy to see that the matrices are related by the equality $\bm{S}_{k+1}=\bm{A}\bm{T}_{k}$, which is valid for all $k=0,1,\ldots,n-1$. Finally, note that, under the condition $\bm{B}=\bm{0}$, the matrices reduce to $\bm{S}_{k}=\bm{A}^{k}$ and $\bm{T}_{k}=\bm{A}^{k}$.

We are now in a position to offer a complete solution to problem \eqref{P-xAxxpqxr-Bxgx-xh}.
\begin{theorem}
\label{T-xAxxpqxr-Bxgx-xh}
Let $\bm{A}$ be a matrix with spectral radius $\lambda$, and $\bm{B}$ be a matrix such that $\mathop\mathrm{Tr}(\bm{B})\leq\mathbb{1}$. Let $\bm{p}$ and $\bm{g}$ be vectors, $\bm{q}$ and $\bm{h}$ be regular vectors, and $r$ be a scalar such that $\bm{h}^{-}\bm{B}^{\ast}\bm{g}\leq\mathbb{1}$ and $\lambda\oplus(\bm{q}^{-}\bm{p})^{1/2}\oplus r>\mathbb{0}$.

Then, the minimum value in problem \eqref{P-xAxxpqxr-Bxgx-xh} is equal to
\begin{multline*}
\theta
=
\bigoplus_{k=1}^{n}\mathop\mathrm{tr}\nolimits^{1/k}(\bm{S}_{k})
\oplus
\bigoplus_{k=1}^{n-1}(\bm{h}^{-}\bm{T}_{k}\bm{g})^{1/k}
\\
\oplus
\bigoplus_{k=0}^{n-1}(\bm{q}^{-}\bm{T}_{k}\bm{g}\oplus\bm{h}^{-}\bm{T}_{k}\bm{p})^{1/(k+1)}
\oplus
\bigoplus_{k=0}^{n-1}(\bm{q}^{-}\bm{T}_{k}\bm{p})^{1/(k+2)}
\oplus
r,
\end{multline*}
and all regular solutions are given by
\begin{equation*}
\bm{x}
=
(\theta^{-1}\bm{A}\oplus\bm{B})^{\ast}\bm{u},
\end{equation*}
where $\bm{u}$ is any regular vector that satisfies the conditions
\begin{equation*}
\theta^{-1}\bm{p}\oplus\bm{g}
\leq
\bm{u}
\leq
((\theta^{-1}\bm{q}^{-}\oplus\bm{h}^{-})(\theta^{-1}\bm{A}\oplus\bm{B})^{\ast})^{-}.
\end{equation*}
\end{theorem}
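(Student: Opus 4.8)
The plan is to implement the parametrization scheme announced in the text. I would introduce a scalar parameter $\theta$ to bound the objective function from above, so that the minimum sought equals the least $\theta$ for which the system formed by the two constraints of \eqref{P-xAxxpqxr-Bxgx-xh} together with the single inequality $\bm{x}^{-}\bm{A}\bm{x}\oplus\bm{x}^{-}\bm{p}\oplus\bm{q}^{-}\bm{x}\oplus r\leq\theta$ admits a regular solution. The task then splits into two parts: reduce this system to the canonical form \eqref{I-Axbx-xd}, and use the solvability criterion of Lemma~\ref{L-Axbx-xd} to locate the threshold value of $\theta$, which will turn out to be the claimed $\theta$.

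The reduction rests on rewriting each summand of the bound as a linear inequality in $\bm{x}$. Splitting the bound into $\bm{x}^{-}\bm{A}\bm{x}\leq\theta$, $\bm{x}^{-}\bm{p}\leq\theta$, $\bm{q}^{-}\bm{x}\leq\theta$ and $r\leq\theta$, I would use the identities $\bm{x}^{-}\bm{x}=\mathbb{1}$ and $\bm{x}\bm{x}^{-}\geq\bm{I}$ to show that, for nonzero $\theta$, the first two are equivalent to $\theta^{-1}\bm{A}\bm{x}\leq\bm{x}$ and $\theta^{-1}\bm{p}\leq\bm{x}$, while the third is equivalent to $\bm{x}\leq\theta\bm{q}$. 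Here the hypothesis $\lambda\oplus(\bm{q}^{-}\bm{p})^{1/2}\oplus r>\mathbb{0}$ enters, guaranteeing $\theta>\mathbb{0}$ so that $\theta^{-1}$ is defined; I would confirm this consistency at the end by noting that $\theta$ dominates each of $\lambda$, $(\bm{q}^{-}\bm{p})^{1/2}$ and $r$, using $\bm{S}_{k}\geq\bm{A}^{k}$ and $\bm{B}^{\ast}\geq\bm{I}$. Merging these three inequalities with the constraints $\bm{B}\bm{x}\leq\bm{x}$, $\bm{g}\leq\bm{x}$, $\bm{x}\leq\bm{h}$ from the equivalent form \eqref{P-xAxxpqxr-Bxx-gxh}, and combining the two upper bounds $\bm{x}\leq\theta\bm{q}$ and $\bm{x}\leq\bm{h}$ through $(\theta^{-1}\bm{q}^{-}\oplus\bm{h}^{-})\bm{x}\leq\mathbb{1}$ via Lemma~\ref{L-IxdA}, yields exactly system \eqref{I-Axbx-xd} with $\bm{A}\mapsto\theta^{-1}\bm{A}\oplus\bm{B}$, $\bm{b}\mapsto\theta^{-1}\bm{p}\oplus\bm{g}$ and $\bm{d}\mapsto(\theta^{-1}\bm{q}^{-}\oplus\bm{h}^{-})^{-}$.

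Applying Lemma~\ref{L-Axbx-xd} to this system, the solvability condition becomes $\Delta\leq\mathbb{1}$ with $\Delta=\mathop\mathrm{Tr}(\theta^{-1}\bm{A}\oplus\bm{B})\oplus(\theta^{-1}\bm{q}^{-}\oplus\bm{h}^{-})(\theta^{-1}\bm{A}\oplus\bm{B})^{\ast}(\theta^{-1}\bm{p}\oplus\bm{g})$, where I have used $\bm{d}^{-}=\theta^{-1}\bm{q}^{-}\oplus\bm{h}^{-}$. The crux is to expand $\Delta$ as a polynomial in $\theta^{-1}$. Using the trace identity \eqref{E-trABk} with $\bm{A}$ replaced by $\theta^{-1}\bm{A}$ and collecting terms by the number $k$ of factors of $\bm{A}$, I would obtain $\mathop\mathrm{Tr}(\theta^{-1}\bm{A}\oplus\bm{B})=\bigoplus_{k=1}^{n}\theta^{-k}\mathop\mathrm{tr}(\bm{S}_{k})\oplus\mathop\mathrm{Tr}(\bm{B})$, where $\mathop\mathrm{Tr}(\bm{B})\leq\mathbb{1}$ by hypothesis. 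Similarly, identity \eqref{E-ABk} gives $(\theta^{-1}\bm{A}\oplus\bm{B})^{\ast}=\bigoplus_{k=0}^{n-1}\theta^{-k}\bm{T}_{k}$, so the second part of $\Delta$ expands into $\bigoplus_{k=0}^{n-1}\theta^{-k}(\theta^{-1}\bm{q}^{-}\oplus\bm{h}^{-})\bm{T}_{k}(\theta^{-1}\bm{p}\oplus\bm{g})$, that is, into the four families $\theta^{-k-2}\bm{q}^{-}\bm{T}_{k}\bm{p}$, $\theta^{-k-1}(\bm{q}^{-}\bm{T}_{k}\bm{g}\oplus\bm{h}^{-}\bm{T}_{k}\bm{p})$ and $\theta^{-k}\bm{h}^{-}\bm{T}_{k}\bm{g}$. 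Since $\oplus$ is idempotent, $\Delta\leq\mathbb{1}$ holds if and only if every individual term is $\leq\mathbb{1}$; the $k=0$ term $\bm{h}^{-}\bm{B}^{\ast}\bm{g}$ is covered by the hypothesis $\bm{h}^{-}\bm{B}^{\ast}\bm{g}\leq\mathbb{1}$, while each remaining term of the form $\theta^{-s}c\leq\mathbb{1}$ is equivalent to $\theta\geq c^{1/s}$. Taking the join of all these lower bounds reproduces precisely the claimed value of $\theta$.

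To finish, I would invoke the standard two-way argument: for an actual minimizer the bound holds with $\theta$ equal to the optimum, so the reduced system is solvable there, forcing the optimum to be $\geq\theta$ by the threshold computation; conversely, at the computed $\theta$ the system is solvable, and any such solution is feasible with objective value $\leq\theta$, forcing the optimum to be $\leq\theta$. Hence the minimum equals $\theta$, and the solution set is read off directly from Lemma~\ref{L-Axbx-xd} as $\bm{x}=(\theta^{-1}\bm{A}\oplus\bm{B})^{\ast}\bm{u}$ with $\theta^{-1}\bm{p}\oplus\bm{g}\leq\bm{u}\leq((\theta^{-1}\bm{q}^{-}\oplus\bm{h}^{-})(\theta^{-1}\bm{A}\oplus\bm{B})^{\ast})^{-}$. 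I expect the main obstacle to be the bookkeeping in the two binomial expansions: correctly matching the grouped sums to the matrices $\bm{S}_{k}$ and $\bm{T}_{k}$ of \eqref{E-S0Sk} and \eqref{E-T0Tk}, tracking the exponent of $\theta^{-1}$ attached to each group, and verifying that the index ranges (namely $k$ up to $n$ for the trace part and up to $n-1$ for the star part) are exactly those dictated by the definitions of $\mathop\mathrm{Tr}$ and the Kleene star.
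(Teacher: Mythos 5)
Your proposal follows the paper's own proof essentially step for step: the same parametrization of the objective by an auxiliary $\theta$, the same splitting of the bound into four inequalities and reduction to a system of the form \eqref{I-Axbx-xd}, the same application of Lemma~\ref{L-Axbx-xd} together with the binomial identities \eqref{E-ABk} and \eqref{E-trABk} to expand the solvability condition in powers of $\theta^{-1}$, and the same extraction of the threshold and of the solution set. The only cosmetic difference is that the paper establishes the a priori bound $\theta\geq\lambda\oplus(\bm{q}^{-}\bm{p})^{1/2}\oplus r>\mathbb{0}$ before multiplying by $\theta^{-1}$, whereas you defer that verification to the end; both orderings are sound.
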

\begin{proof}
We introduce a parameter to represent the minimum value of the objective function, and then reduce the problem to solving a parametrized system of linear inequalities. The necessary and sufficient conditions for the system to have regular solutions serve to evaluate the parameter, whereas the general solution of the system is taken as a complete solution to the initial optimization problem.

Denote by $\theta$ the minimum of the objective function over all regular vectors $\bm{x}$. Then, all regular solutions to problem \eqref{P-xAxxpqxr-Bxgx-xh} are determined by the system
\begin{equation}
\begin{aligned}
\bm{x}^{-}\bm{A}\bm{x}\oplus\bm{x}^{-}\bm{p}\oplus\bm{q}^{-}\bm{x}\oplus r
&\leq
\theta,
\\
\bm{B}\bm{x}\oplus\bm{g}
&\leq
\bm{x},
\\
\bm{x}
&\leq
\bm{h}.
\end{aligned}
\label{I-xAxxpqxrtheta-Bxgx-xh}
\end{equation}

The first inequality at \eqref{I-xAxxpqxrtheta-Bxgx-xh} is equivalent to the four inequalities
\begin{equation}
\bm{x}^{-}\bm{A}\bm{x}
\leq
\theta,
\qquad
\bm{x}^{-}\bm{p}
\leq
\theta,
\qquad
\bm{q}^{-}\bm{x}
\leq
\theta,
\qquad
r
\leq
\theta.
\label{I-xAx-xp-qx-r-theta}
\end{equation}

We use these inequalities to derive a lower bound for $\theta$ and verify that $\theta\ne\mathbb{0}$. The first inequality at \eqref{I-xAx-xp-qx-r-theta} and Lemma~\ref{L-minxAx} imply that $\theta\geq\bm{x}^{-}\bm{A}\bm{x}\geq\lambda$. From the next two inequalities and a property of the conjugate transposition, we derive $\theta^{2}\geq\bm{q}^{-}\bm{x}\bm{x}^{-}\bm{p}\geq\bm{q}^{-}\bm{p}$, which gives $\theta\geq(\bm{q}^{-}\bm{p})^{1/2}$. Since $\theta\geq r$ as well, we finally obtain a lower bound for $\theta$ in the form
\begin{equation}
\theta
\geq
\lambda\oplus(\bm{q}^{-}\bm{p})^{1/2}\oplus r,
\label{I-thetalambdaqpr}
\end{equation}
where the right-hand side is nonzero by the conditions of the theorem. 

We can now multiply the first two inequalities at \eqref{I-xAx-xp-qx-r-theta} by $\theta^{-1}$, and then apply Lemma~\ref{I-Axd} to the first three. As a result, we have the inequalities
$$
\theta^{-1}\bm{A}\bm{x}
\leq
\bm{x},
\qquad
\theta^{-1}\bm{p}
\leq
\bm{x},
\qquad
\bm{x}
\leq
\theta\bm{q}.
$$

As the next step, we combine these inequalities with those in the system at \eqref{I-xAxxpqxrtheta-Bxgx-xh}. Specifically, the first two inequalities together with $\bm{B}\bm{x}\oplus\bm{g}\leq\bm{x}$ give the inequality $(\theta^{-1}\bm{A}\oplus\bm{B})\bm{x}\oplus\theta^{-1}\bm{p}\oplus\bm{g}\leq\bm{x}$.

In addition, we take the inequalities $\bm{x}\leq\theta\bm{q}$ and $\bm{x}\leq\bm{h}$, and put them into the forms $\bm{x}^{-}\geq\theta^{-1}\bm{q}^{-}$ and $\bm{x}^{-}\geq\bm{h}^{-}$. The last two inequalities are combined into one, which is then rewritten to give $\bm{x}\leq(\theta^{-1}\bm{q}^{-}\oplus\bm{h}^{-})^{-}$.

By coupling the obtained inequalities, we represent system \eqref{I-xAxxpqxrtheta-Bxgx-xh} as
\begin{equation}
\begin{aligned}
(\theta^{-1}\bm{A}\oplus\bm{B})\bm{x}
\oplus
\theta^{-1}\bm{p}\oplus\bm{g}
&\leq
\bm{x},
\\
\bm{x}
&\leq
(\theta^{-1}\bm{q}^{-}\oplus\bm{h}^{-})^{-}.
\end{aligned}
\label{I-thetaABxthetapgx-xthetaqh}
\end{equation}

Considering that system \eqref{I-thetaABxthetapgx-xthetaqh} has the form of \eqref{I-Axbx-xd}, we can apply Lemma~\ref{L-Axbx-xd} to examine this system. By the lemma, the necessary and sufficient condition for \eqref{I-thetaABxthetapgx-xthetaqh} to have regular solutions takes the form
\begin{equation*}
\mathop\mathrm{Tr}(\theta^{-1}\bm{A}\oplus\bm{B})
\oplus
(\theta^{-1}\bm{q}^{-}\oplus\bm{h}^{-})(\theta^{-1}\bm{A}\oplus\bm{B})^{\ast}(\theta^{-1}\bm{p}\oplus\bm{g})
\leq
\mathbb{1}.
\end{equation*}

To solve this inequality with respect to the parameter $\theta$, we put it in a more convenient form by expanding the left-hand side in powers of $\theta$.

As a starting point, we examine the matrix asterate
$$
(\theta^{-1}\bm{A}\oplus\bm{B})^{\ast}
=
\bigoplus_{k=0}^{n-1}(\theta^{-1}\bm{A}\oplus\bm{B})^{k}
=
\bm{I}
\oplus
\bigoplus_{k=1}^{n-1}(\theta^{-1}\bm{A}\oplus\bm{B})^{k}.
$$

After application of \eqref{E-ABk} to the second term, we rearrange the expression to collect terms with the same power of $\theta$, and then use \eqref{E-Tk} to write
\begin{multline*}
(\theta^{-1}\bm{A}\oplus\bm{B})^{\ast}
=
\bigoplus_{k=1}^{n-1}\mathop{\bigoplus\hspace{1.0em}}_{0\leq i_{0}+i_{1}+\cdots+i_{k}\leq n-k-1}\theta^{-k}\bm{B}^{i_{0}}(\bm{A}\bm{B}^{i_{1}}\cdots\bm{A}\bm{B}^{i_{k}})
\oplus
\bigoplus_{k=0}^{n-1}\bm{B}^{k}
\\
=
\bigoplus_{k=1}^{n-1}\theta^{-k}\bm{T}_{k}
\oplus
\bm{T}_{0}
=
\bigoplus_{k=0}^{n-1}\theta^{-k}\bm{T}_{k}.
\end{multline*}

By using \eqref{E-trABk}, \eqref{E-Sk}, \eqref{E-Tk} and properties of the trace function, we also have
\begin{multline*}
\mathop\mathrm{Tr}(\theta^{-1}\bm{A}\oplus\bm{B})
=
\bigoplus_{k=1}^{n}\mathop\mathrm{tr}(\theta^{-1}\bm{A}\oplus\bm{B})^{k}
\\
=
\bigoplus_{k=1}^{n}\mathop{\bigoplus\hspace{1.2em}}_{0\leq i_{1}+\cdots+i_{k}\leq n-k}\theta^{-k}\mathop\mathrm{tr}(\bm{A}\bm{B}^{i_{1}}\cdots\bm{A}\bm{B}^{i_{k}})
\oplus
\bigoplus_{k=1}^{n}\mathop\mathrm{tr}(\bm{B}^{k})
\\
=
\bigoplus_{k=1}^{n}\theta^{-k}\mathop\mathrm{tr}(\bm{S}_{k})
\oplus
\mathop\mathrm{Tr}(\bm{B}).
\end{multline*}

Substitution of these results into the condition for regular solutions yields
$$
\bigoplus_{k=1}^{n}\theta^{-k}\mathop\mathrm{tr}(\bm{S}_{k})
\oplus
\bigoplus_{k=0}^{n-1}\theta^{-k}(\theta^{-1}\bm{q}^{-}\oplus\bm{h}^{-})\bm{T}_{k}(\theta^{-1}\bm{p}\oplus\bm{g})
\oplus
\mathop\mathrm{Tr}(\bm{B})
\leq
\mathbb{1}.
$$

Since $\mathop\mathrm{Tr}(\bm{B})\leq\mathbb{1}$ by the conditions of the theorem, the term $\mathop\mathrm{Tr}(\bm{B})$ does not affect the solution of the inequality, and hence can be omitted. The remaining inequality is equivalent to the system of inequalities
\begin{align*}
\theta^{-k}\mathop\mathrm{tr}(\bm{S}_{k})
&\leq
\mathbb{1},
\qquad
k=1,\ldots,n;
\\
\theta^{-k}(\theta^{-1}\bm{q}^{-}\oplus\bm{h}^{-})\bm{T}_{k}(\theta^{-1}\bm{p}\oplus\bm{g})
&\leq
\mathbb{1},
\qquad
k=0,1,\ldots,n-1;
\end{align*}
which can be further split into the system
\begin{align*}
\theta^{-k}\mathop\mathrm{tr}(\bm{S}_{k})
&\leq
\mathbb{1},
\qquad
k=1,\ldots,n;
\\
\theta^{-k}\bm{h}^{-}\bm{T}_{k}\bm{g}
&\leq
\mathbb{1},
\\
\theta^{-k-1}(\bm{q}^{-}\bm{T}_{k}\bm{g}\oplus\bm{h}^{-}\bm{T}_{k}\bm{p})
&\leq
\mathbb{1},
\\
\theta^{-k-2}\bm{q}^{-}\bm{T}_{k}\bm{p}
&\leq
\mathbb{1},
\qquad
k=0,1,\ldots,n-1.
\end{align*}

Note that $\bm{h}^{-}\bm{T}_{0}\bm{g}=\bm{h}^{-}\bm{B}^{\ast}\bm{g}\leq\mathbb{1}$ by the conditions of the theorem, and thus the second inequality in the system is valid at $k=0$ for all $\theta>\mathbb{0}$.

By solving the inequalities, we have
\begin{align*}
\theta
&\geq
\mathop\mathrm{tr}\nolimits^{1/k}(\bm{S}_{k}),
&&
k=1,\ldots,n;
\\
\theta
&\geq
(\bm{h}^{-}\bm{T}_{k}\bm{g})^{1/k},
&&
k=1,\ldots,n-1;
\\
\theta
&\geq
(\bm{q}^{-}\bm{T}_{k}\bm{g}\oplus\bm{h}^{-}\bm{T}_{k}\bm{p})^{1/(k+1)},
\\
\theta
&\geq
(\bm{q}^{-}\bm{T}_{k}\bm{p})^{1/(k+2)},
&&
k=0,1,\ldots,n-1.
\end{align*}

The obtained solutions can be combined into one equivalent inequality
\begin{multline*}
\theta
\geq
\bigoplus_{k=1}^{n}\mathop\mathrm{tr}\nolimits^{1/k}(\bm{S}_{k})
\oplus
\bigoplus_{k=1}^{n-1}(\bm{h}^{-}\bm{T}_{k}\bm{g})^{1/k}
\\
\oplus
\bigoplus_{k=0}^{n-1}(\bm{q}^{-}\bm{T}_{k}\bm{g}\oplus\bm{h}^{-}\bm{T}_{k}\bm{p})^{1/(k+1)}
\oplus
\bigoplus_{k=0}^{n-1}(\bm{q}^{-}\bm{T}_{k}\bm{p})^{1/(k+2)}.
\end{multline*}

We have to couple the lower bound given by \eqref{I-thetalambdaqpr} with that defined by the last inequality. It is not difficult to verify that the right-hand side of this inequality already takes account of the terms $\lambda$ and $(\bm{q}^{-}\bm{p})^{1/2}$ presented in \eqref{I-thetalambdaqpr}. Indeed, considering that $\bm{S}_{k}\geq\bm{A}^{k}$, we have
$$
\bigoplus_{k=1}^{n}\mathop\mathrm{tr}\nolimits^{1/k}(\bm{S}_{k})
\geq
\bigoplus_{k=1}^{n}\mathop\mathrm{tr}\nolimits^{1/k}(\bm{A}^{k})
=
\lambda.
$$

Moreover, since $\bm{T}_{0}=\bm{B}^{\ast}\geq\bm{I}$, it is easy to see that
$$
\bigoplus_{k=0}^{n-1}(\bm{q}^{-}\bm{T}_{k}\bm{p})^{1/(k+2)}
\geq
(\bm{q}^{-}\bm{T}_{0}\bm{p})^{1/2}
\geq
(\bm{q}^{-}\bm{p})^{1/2}.
$$

By combining all lower bounds obtained for $\theta$, we arrive at the inequality
\begin{multline*}
\theta
\geq
\bigoplus_{k=1}^{n}\mathop\mathrm{tr}\nolimits^{1/k}(\bm{S}_{k})
\oplus
\bigoplus_{k=1}^{n-1}(\bm{h}^{-}\bm{T}_{k}\bm{g})^{1/k}
\\
\oplus
\bigoplus_{k=0}^{n-1}(\bm{q}^{-}\bm{T}_{k}\bm{g}\oplus\bm{h}^{-}\bm{T}_{k}\bm{p})^{1/(k+1)}
\oplus
\bigoplus_{k=0}^{n-1}(\bm{q}^{-}\bm{T}_{k}\bm{p})^{1/(k+2)}
\oplus
r.
\end{multline*}

Since $\theta$ is assumed to be the minimal value of the objective function, this inequality must hold as an equality, which yields the desired minimum.

Finally, we take the minimum value of $\theta$, and then apply Lemma~\ref{L-Axbx-xd} to obtain all solutions of the system at \eqref{I-thetaABxthetapgx-xthetaqh} in the form
$$
\bm{x}
=
(\theta^{-1}\bm{A}\oplus\bm{B})^{\ast}\bm{u},
\qquad
\theta^{-1}\bm{p}\oplus\bm{g}
\leq
\bm{u}
\leq((\theta^{-1}\bm{q}^{-}\oplus\bm{h}^{-})(\theta^{-1}\bm{A}\oplus\bm{B})^{\ast})^{-}.
$$

Because the solution obtained is also a complete solution of the initial optimization problem, this ends the proof of the theorem.
\qed
\end{proof}

We conclude this section with a brief discussion of the computational complexity of the solution obtained to see that it is polynomial in the dimension $n$. Indeed, this complexity is determined by the complexity of computing the minimum value $\theta$, as the other components of the solution are given by a finite number of matrix and vector operations, and thus obviously take no more than polynomial time.

Furthermore, it directly follows from the expression for $\theta$ that, if the evaluation of the matrix sequences $\bm{S}_{1},\ldots,\bm{S}_{n}$ and $\bm{T}_{0},\ldots,\bm{T}_{n-1}$ has polynomial complexity, then so has that of $\theta$. Considering that $\bm{S}_{k+1}=\bm{A}\bm{T}_{k}$ for all $k=0,\ldots,n-1$, we need to verify that $\bm{T}_{k}$ can be obtained in polynomial time.

To describe a polynomial scheme of calculating $\bm{T}_{k}$, we first write
$$
\bm{T}_{k}
=
\bigoplus_{l=1}^{n-k-1}\bm{Q}_{kl},
\qquad
\bm{Q}_{kl}
=
\mathop{\bigoplus\hspace{-0.4em}}_{i_{0}+i_{1}+\cdots+i_{k}=l}
\bm{B}^{i_{0}}(\bm{A}\bm{B}^{i_{1}}\cdots\bm{A}\bm{B}^{i_{k}}),
$$
where $\bm{Q}_{kl}$ is the sum of all matrix products that are comprised of $k$ factors equal to $\bm{A}$ and $l$ factors equal to $\bm{B}$, with $\bm{Q}_{k0}=\bm{A}^{k}$, $\bm{Q}_{0l}=\bm{B}^{l}$ and $\bm{Q}_{00}=\bm{I}$. In this case, the evaluation of the matrices $\bm{T}_{0},\ldots,\bm{T}_{n-1}$ reduces to computing the matrices $\bm{Q}_{kl}$ for all $k=0,\ldots,n-1$ and $l=0,\ldots,n-k-1$.

Furthermore, we note that the recurrent relation $\bm{Q}_{kl}=\bm{A}\bm{Q}_{k-1,l}\oplus\bm{B}\bm{Q}_{k,l-1}$ holds for all $k,l=1,2,\ldots$ It is clear that this relation offers a natural way to obtain successively all matrices $\bm{Q}_{kl}$, using two matrix multiplications and one matrix addition per matrix. Since the overall number of matrices involved in computation is $1+2+\cdots+(n-1)=n(n-1)/2$, the computation of all matrices $\bm{T}_{k}$ requires polynomial time, and thus the entire solution has polynomial complexity.

\subsection{Some special cases}

As direct consequences of the result obtained, we now find solutions to special cases of problems \eqref{P-xAxxpqxr-Bxgx-xh} and \eqref{P-xAxxpqxr-Bxx-gxh} with reduced sets of constraints. To begin with, eliminate the first constraint in \eqref{P-xAxxpqxr-Bxx-gxh} and consider the problem 
\begin{equation}
\begin{aligned}
&
\text{minimize}
&&
\bm{x}^{-}\bm{A}\bm{x}\oplus\bm{x}^{-}\bm{p}\oplus\bm{q}^{-}\bm{x}\oplus r,
\\
&
\text{subject to}
&&
\bm{g}
\leq
\bm{x}
\leq
\bm{h}.
\end{aligned}
\label{P-xAxxpqxr-gxh}
\end{equation}

Clearly, the solution to this problem can be derived from that of \eqref{P-xAxxpqxr-Bxx-gxh} by setting $\bm{B}=\mathbb{0}$. Under this condition, we have $\bm{S}_{k}=\bm{A}^{k}$ and $\bm{T}_{k}=\bm{A}^{k}$, whereas the solution is described as follows.
\begin{corollary}
\label{C-xAxxpqxr-gxh}
Let $\bm{A}$ be a matrix with spectral radius $\lambda$. Let $\bm{p}$ and $\bm{g}$ be vectors, $\bm{q}$ and $\bm{h}$ be regular vectors, and $r$ be a scalar such that $\bm{h}^{-}\bm{g}\leq\mathbb{1}$ and $\lambda\oplus(\bm{q}^{-}\bm{p})^{1/2}\oplus r>\mathbb{0}$. Then, the minimum value in problem \eqref{P-xAxxpqxr-gxh} is equal to
\begin{multline*}
\theta
=
\lambda
\oplus
\bigoplus_{k=1}^{n-1}(\bm{h}^{-}\bm{A}^{k}\bm{g})^{1/k}
\oplus
\bigoplus_{k=0}^{n-1}(\bm{q}^{-}\bm{A}^{k}\bm{g}\oplus\bm{h}^{-}\bm{A}^{k}\bm{p})^{1/(k+1)}
\\
\oplus
\bigoplus_{k=0}^{n-1}(\bm{q}^{-}\bm{A}^{k}\bm{p})^{1/(k+2)}
\oplus
r,
\end{multline*}
and all regular solutions are given by
\begin{equation*}
\bm{x}
=
(\theta^{-1}\bm{A})^{\ast}\bm{u},
\qquad
\theta^{-1}\bm{p}\oplus\bm{g}
\leq
\bm{u}
\leq
((\theta^{-1}\bm{q}^{-}\oplus\bm{h}^{-})(\theta^{-1}\bm{A})^{\ast})^{-}.
\end{equation*}
\end{corollary}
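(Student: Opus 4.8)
The plan is to obtain the corollary as a direct specialization of Theorem~\ref{T-xAxxpqxr-Bxgx-xh} to the case $\bm{B}=\bm{0}$. First I would confirm that problem~\eqref{P-xAxxpqxr-gxh} is literally problem~\eqref{P-xAxxpqxr-Bxgx-xh} with $\bm{B}=\bm{0}$: the constraint $\bm{B}\bm{x}\oplus\bm{g}\leq\bm{x}$ collapses to $\bm{g}\leq\bm{x}$ because $\bm{0}\bm{x}=\bm{0}$, and together with $\bm{x}\leq\bm{h}$ this is exactly $\bm{g}\leq\bm{x}\leq\bm{h}$. Next I would check that the hypotheses of the parent theorem are satisfied under this substitution. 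Since $\mathop\mathrm{Tr}(\bm{0})=\mathbb{0}\leq\mathbb{1}$, the condition $\mathop\mathrm{Tr}(\bm{B})\leq\mathbb{1}$ holds automatically, while $\bm{B}^{\ast}=\bm{I}$ turns $\bm{h}^{-}\bm{B}^{\ast}\bm{g}\leq\mathbb{1}$ into $\bm{h}^{-}\bm{g}\leq\mathbb{1}$, which is precisely the assumption stated in the corollary. Thus Theorem~\ref{T-xAxxpqxr-Bxgx-xh} applies verbatim.

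The key computational step is to evaluate the matrices $\bm{S}_{k}$ and $\bm{T}_{k}$ at $\bm{B}=\bm{0}$. Because $\bm{B}^{0}=\bm{I}$ and $\bm{B}^{i}=\bm{0}$ for every $i\geq1$, in each of the sums \eqref{E-S0Sk} and \eqref{E-T0Tk} every summand containing a positive power of $\bm{B}$ vanishes, leaving only the term in which all exponents are zero. This collapses the defining sums to $\bm{S}_{k}=\bm{A}^{k}$ and $\bm{T}_{k}=\bm{A}^{k}$ (with $\bm{T}_{0}=\bm{B}^{\ast}=\bm{I}=\bm{A}^{0}$), exactly as noted in the text preceding the corollary. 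In parallel, $(\theta^{-1}\bm{A}\oplus\bm{B})^{\ast}$ simplifies to $(\theta^{-1}\bm{A})^{\ast}$.

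It then remains to substitute these identities into the formulas of Theorem~\ref{T-xAxxpqxr-Bxgx-xh}. The only term requiring more than a mechanical replacement is the first sum: substituting $\bm{S}_{k}=\bm{A}^{k}$ gives $\bigoplus_{k=1}^{n}\mathop\mathrm{tr}\nolimits^{1/k}(\bm{A}^{k})$, which I would recognize as the spectral radius $\lambda$ by the defining formula in Section~\ref{S-BDNO}. All remaining sums inherit $\bm{T}_{k}=\bm{A}^{k}$ directly, producing the stated value of $\theta$, and the same substitution in the solution set yields $\bm{x}=(\theta^{-1}\bm{A})^{\ast}\bm{u}$ with the displayed bounds on $\bm{u}$.

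There is no genuine obstacle in this argument: the result is a corollary precisely because the substitution $\bm{B}=\bm{0}$ annihilates all the mixed $\bm{A}$--$\bm{B}$ products. The only points demanding care are verifying that the parent hypotheses indeed hold for free (notably $\mathop\mathrm{Tr}(\bm{B})\leq\mathbb{1}$) and correctly folding the first trace sum into $\lambda$ via the spectral-radius formula rather than leaving it in expanded form.
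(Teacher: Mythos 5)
Your proposal is correct and follows exactly the paper's own route: the corollary is obtained by specializing Theorem~\ref{T-xAxxpqxr-Bxgx-xh} to $\bm{B}=\bm{0}$, which collapses \eqref{E-S0Sk} and \eqref{E-T0Tk} to $\bm{S}_{k}=\bm{T}_{k}=\bm{A}^{k}$ and turns the first trace sum into $\lambda$ via the spectral-radius formula. Your extra checks (that $\mathop{\mathrm{Tr}}(\bm{0})\leq\mathbb{1}$ holds for free and that $\bm{h}^{-}\bm{B}^{\ast}\bm{g}\leq\mathbb{1}$ reduces to $\bm{h}^{-}\bm{g}\leq\mathbb{1}$) are exactly the details the paper leaves implicit.
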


Furthermore, we consider another special case of \eqref{P-xAxxpqxr-Bxx-gxh}, which takes the form
\begin{equation}
\begin{aligned}
&
\text{minimize}
&&
\bm{x}^{-}\bm{A}\bm{x}\oplus\bm{x}^{-}\bm{p}\oplus\bm{q}^{-}\bm{x}\oplus r,
\\
&
\text{subject to}
&&
\bm{B}\bm{x}
\leq
\bm{x}.
\end{aligned}
\label{P-xAxxpqxr-Bxx}
\end{equation}

After slight modification of the proof of Theorem~\ref{T-xAxxpqxr-Bxgx-xh}, we arrive at the next result, which can also be obtained directly by putting $\bm{g}=\bm{0}$ and $\bm{h}^{-}=\bm{0}^{T}$ in the solution of problem \eqref{P-xAxxpqxr-Bxx-gxh}.
\begin{corollary}
\label{C-xAxxpqxr-Bxx}
Let $\bm{A}$ be a matrix with spectral radius $\lambda$, and $\bm{B}$ be a matrix such that $\mathop\mathrm{Tr}(\bm{B})\leq\mathbb{1}$. Let $\bm{p}$ be a vector, $\bm{q}$ be a regular vector, and $r$ be a scalar such that $\lambda\oplus(\bm{q}^{-}\bm{p})^{1/2}\oplus r>\mathbb{0}$. Then, the minimum value in problem \eqref{P-xAxxpqxr-Bxx} is equal to
$$
\theta
=
\bigoplus_{k=1}^{n}\mathop\mathrm{tr}\nolimits^{1/k}(\bm{S}_{k})
\oplus
\bigoplus_{k=0}^{n-1}(\bm{q}^{-}\bm{T}_{k}\bm{p})^{1/(k+2)}
\oplus
r,
$$
and all regular solutions are given by
\begin{equation*}
\bm{x}
=
(\theta^{-1}\bm{A}\oplus\bm{B})^{\ast}\bm{u},
\qquad
\theta^{-1}\bm{p}
\leq
\bm{u}
\leq
\theta(\bm{q}^{-}(\theta^{-1}\bm{A}\oplus\bm{B})^{\ast})^{-}.
\end{equation*}
\end{corollary}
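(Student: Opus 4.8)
The plan is to reproduce the argument of Theorem~\ref{T-xAxxpqxr-Bxgx-xh} almost verbatim, with the two-sided box constraint suppressed; equivalently, one may set $\bm{g}=\bm{0}$ and $\bm{h}^{-}=\bm{0}^{T}$ in the final formulas, but since $\bm{h}$ is then no longer a regular vector, I prefer to re-run the proof rather than formally substitute into it. First I would denote by $\theta$ the minimum of the objective over regular $\bm{x}$, so that the regular solutions of \eqref{P-xAxxpqxr-Bxx} are exactly those of the system consisting of $\bm{x}^{-}\bm{A}\bm{x}\oplus\bm{x}^{-}\bm{p}\oplus\bm{q}^{-}\bm{x}\oplus r\leq\theta$ together with $\bm{B}\bm{x}\leq\bm{x}$. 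Splitting the first inequality into the four inequalities $\bm{x}^{-}\bm{A}\bm{x}\leq\theta$, $\bm{x}^{-}\bm{p}\leq\theta$, $\bm{q}^{-}\bm{x}\leq\theta$ and $r\leq\theta$, and using Lemma~\ref{L-minxAx} together with a property of conjugate transposition, I obtain the lower bound $\theta\geq\lambda\oplus(\bm{q}^{-}\bm{p})^{1/2}\oplus r$, which is nonzero by hypothesis and certifies $\theta>\mathbb{0}$.

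Next I would multiply the first two of these by $\theta^{-1}$ and apply Lemma~\ref{L-IxdA} to the first three, yielding $\theta^{-1}\bm{A}\bm{x}\leq\bm{x}$, $\theta^{-1}\bm{p}\leq\bm{x}$ and $\bm{x}\leq\theta\bm{q}$. Coupling the first two with $\bm{B}\bm{x}\leq\bm{x}$ gives $(\theta^{-1}\bm{A}\oplus\bm{B})\bm{x}\oplus\theta^{-1}\bm{p}\leq\bm{x}$, while the remaining $\bm{x}\leq\theta\bm{q}$ is already one-sided, no $\bm{h}$ term arising. This is precisely system \eqref{I-Axbx-xd} with $\bm{A}$ replaced by $\theta^{-1}\bm{A}\oplus\bm{B}$, with $\bm{b}=\theta^{-1}\bm{p}$ and the regular vector $\bm{d}=\theta\bm{q}$. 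Applying Lemma~\ref{L-Axbx-xd}, regular solutions exist if and only if $\mathop\mathrm{Tr}(\theta^{-1}\bm{A}\oplus\bm{B})\oplus\theta^{-1}\bm{q}^{-}(\theta^{-1}\bm{A}\oplus\bm{B})^{\ast}\theta^{-1}\bm{p}\leq\mathbb{1}$, using $(\theta\bm{q})^{-}=\theta^{-1}\bm{q}^{-}$.

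The heart of the computation is the same expansion as in the theorem: write $(\theta^{-1}\bm{A}\oplus\bm{B})^{\ast}=\bigoplus_{k=0}^{n-1}\theta^{-k}\bm{T}_{k}$ via \eqref{E-ABk} and \eqref{E-T0Tk}, and $\mathop\mathrm{Tr}(\theta^{-1}\bm{A}\oplus\bm{B})=\bigoplus_{k=1}^{n}\theta^{-k}\mathop\mathrm{tr}(\bm{S}_{k})\oplus\mathop\mathrm{Tr}(\bm{B})$ via \eqref{E-trABk} and \eqref{E-S0Sk}. Since $\mathop\mathrm{Tr}(\bm{B})\leq\mathbb{1}$ its term drops, and the existence condition splits into $\theta^{-k}\mathop\mathrm{tr}(\bm{S}_{k})\leq\mathbb{1}$ for $k=1,\ldots,n$ and $\theta^{-k-2}\bm{q}^{-}\bm{T}_{k}\bm{p}\leq\mathbb{1}$ for $k=0,\ldots,n-1$. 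Solving each for $\theta$ and taking the join yields $\theta\geq\bigoplus_{k=1}^{n}\mathop\mathrm{tr}\nolimits^{1/k}(\bm{S}_{k})\oplus\bigoplus_{k=0}^{n-1}(\bm{q}^{-}\bm{T}_{k}\bm{p})^{1/(k+2)}$.

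Then I would check that this join already dominates the a priori bound from the first step: $\bm{S}_{k}\geq\bm{A}^{k}$ gives $\bigoplus_{k=1}^{n}\mathop\mathrm{tr}\nolimits^{1/k}(\bm{S}_{k})\geq\lambda$, and $\bm{T}_{0}=\bm{B}^{\ast}\geq\bm{I}$ gives $(\bm{q}^{-}\bm{T}_{0}\bm{p})^{1/2}\geq(\bm{q}^{-}\bm{p})^{1/2}$; adjoining $r$ produces the claimed value of $\theta$, and since $\theta$ is the minimum the bound holds with equality. Finally, invoking Lemma~\ref{L-Axbx-xd} at this minimal $\theta$ returns $\bm{x}=(\theta^{-1}\bm{A}\oplus\bm{B})^{\ast}\bm{u}$ with $\theta^{-1}\bm{p}\leq\bm{u}\leq(\theta^{-1}\bm{q}^{-}(\theta^{-1}\bm{A}\oplus\bm{B})^{\ast})^{-}$, and rewriting the upper bound via $(\theta^{-1}\bm{w})^{-}=\theta\bm{w}^{-}$ gives $\bm{u}\leq\theta(\bm{q}^{-}(\theta^{-1}\bm{A}\oplus\bm{B})^{\ast})^{-}$, as stated. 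The main obstacle here is not conceptual, since the chain of reductions is forced by the earlier lemmas, but bookkeeping: I must confirm that the mixed terms involving $\bm{g}$ and $\bm{h}$ present in Theorem~\ref{T-xAxxpqxr-Bxgx-xh} vanish consistently and that the absorption of $\lambda$ and $(\bm{q}^{-}\bm{p})^{1/2}$ into the surviving join goes through exactly as above.
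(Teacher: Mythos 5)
Your proposal is correct and follows exactly the route the paper indicates: it re-runs the proof of Theorem~\ref{T-xAxxpqxr-Bxgx-xh} with the box constraint removed, so that the parametrized system becomes \eqref{I-Axbx-xd} with $\bm{b}=\theta^{-1}\bm{p}$ and $\bm{d}=\theta\bm{q}$, the mixed terms in $\bm{g}$ and $\bm{h}$ never arise, and the absorption of $\lambda$ and $(\bm{q}^{-}\bm{p})^{1/2}$ via $\bm{S}_{k}\geq\bm{A}^{k}$ and $\bm{T}_{0}\geq\bm{I}$ goes through as in the theorem. This is precisely the ``slight modification of the proof'' that the paper invokes, so no further comment is needed.
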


Finally, note that eliminating both inequality constraints in \eqref{P-xAxxpqxr-Bxgx-xh} leads to the same solution as that provided by Theorem~\ref{T-xAxxpqxr}.

\section{Application to project scheduling}
\label{S-APS}

We now apply the result obtained to solve an example problem, which is drawn from project scheduling \cite{Demeulemeester2002Project,Neumann2003Project,Tkindt2006Multicriteria} and serves to motivate and illustrate the study. 
 
Consider a project consisting of a set of activities that are performed in parallel under various temporal constraints given by precedence relationships, release times and time windows. The precedence relationships are defined for each pair of activities and include the start-finish constraints on the minimum allowed time lag between the initiation of one activity and completion of another, and the start-start constraints on the minimum lag between the initiations of the activities. Once an activity starts, it continues to its completion, and no interruption is allowed. The activities are completed as soon as possible under the start-finish constraints.

The release time constraints take the form of release dates and release deadlines to specify that the activities cannot be initiated, respectively, before and after prescribed times. The time windows are given by lower and upper boundaries, and determine the minimum time slots preallocated to each activity. The activities have to occupy their time windows entirely. If the initiation time of an activity falls to the right of the lower boundary of its window, this time is adjusted by shifting to this boundary. In a similar way, the completion time is set to the upper boundary if it appears to the left of this boundary.

Each activity in the project has its flow-time defined as the duration of the interval between the adjusted initiation and completion times. A schedule is optimal if it minimizes the maximum flow-times over all activities. The problem of interest is to find the initiation and completion times of the activities to provide an optimal schedule subject to the temporal constraints described above.

\subsection{Representation and solution of scheduling problem}

Suppose a project involves $n$ activities. For each activity $i=1,\ldots,n$, let $x_{i}$ be the initiation and $y_{i}$ the completion time. We denote the minimum possible time lags between the initiation of activity $j=1,\ldots,n$ and the completion of $i$ by $a_{ij}$, and between the initiations of $j$ and $i$ by $b_{ij}$. If a time lag is not specified for a pair of activities, we set it to $-\infty$.

The start-finish constraints yield the equalities
$$
y_{i}
=
\max(a_{i1}+x_{1},\ldots,a_{in}+x_{n}),
\qquad
i=1,\ldots,n;
$$
whereas the start-start constraints lead to the inequalities
$$
x_{i}
\geq
\max(b_{i1}+x_{1},\ldots,b_{in}+x_{n}),
\qquad
i=1,\ldots,n.
$$

Let $g_{i}$ and $h_{i}$ be, respectively, the possible earliest and latest initiation times. The release date and release deadline constraints are given by the inequalities
$$
g_{i}
\leq
x_{i}
\leq
h_{i},
\qquad
i=1,\ldots,n.
$$

Then, we denote the lower and upper boundaries of the minimum time window for activity $i$ by $q_{i}$ and $p_{i}$, respectively. Let $s_{i}$ be the adjusted initiation time and $t_{i}$ the adjusted completion time of the activity. Since the time window must be fully occupied, we have
$$
s_{i}
=
\min(x_{i},q_{i})
=
-
\max(-x_{i},-q_{i}),
\quad
t_{i}
=
\max(y_{i},p_{i}),
\qquad
i=1,\ldots,n.
$$

Finally, the maximum flow-time over all activities is given by
$$
\max(t_{1}-s_{1},\ldots,t_{n}-s_{n}).
$$

We are now in a position to represent the optimal scheduling problem of interest as that of finding $x_{i}$, $y_{i}$, $s_{i}$ and $t_{i}$ for all $i=1,\ldots,n$ to
\begin{equation*}
\begin{aligned}
&
\text{minimize}
&&
\max_{1\leq i\leq n}(t_{i}-s_{i}),
\\
&
\text{subject to}
&&
s_{i}
=
-
\max(-x_{i},-q_{i}),
&&
t_{i}
=
\max(y_{i},p_{i}),
\\
&&&
y_{i}
=
\max_{1\leq j\leq n}(a_{ij}+x_{j}),
&&
x_{i}
\geq
\max_{1\leq j\leq n}(b_{ij}+x_{j}),
\\
&&&
g_{i}
\leq
x_{i}
\leq
h_{i},
&&
i=1,\ldots,n.
\end{aligned}
\end{equation*}

It is not difficult to see that this problem can be represented and solved within the framework of linear programming, which generally offers algorithmic solutions rather than a direct complete solution in an explicit form.

To obtain a direct solution, we place the problem in the context of tropical mathematics. Considering that the problem is formulated only in terms of the operations of maximum, ordinary addition, and additive inversion, we can rewrite it in the setting of the semifield $\mathbb{R}_{\max,+}$ as follows: 
\begin{equation*}
\begin{aligned}
&
\text{minimize}
&&
\bigoplus_{i=1}^{n}s_{i}^{-1}t_{i},
\\
&
\text{subject to}
&&
s_{i}
=
(x_{i}^{-1}\oplus q_{i}^{-1})^{-1},
&&
t_{i}
=
y_{i}\oplus p_{i},
\\
&&&
y_{i}
=
\bigoplus_{j=1}^{n}a_{ij}x_{j},
&&
x_{i}
\geq
\bigoplus_{j=1}^{n}b_{ij}x_{j},
\\
&&&
g_{i}
\leq
x_{i}
\leq
h_{i},
&&
i=1,\ldots,n.
\end{aligned}
\end{equation*}

Furthermore, we put the problem into a compact vector form. We introduce the matrix-vector notation
$$
\bm{A}
=
(a_{ij}),
\quad
\bm{B}
=
(b_{ij}),
\quad
\bm{x}
=
(x_{i}),
\quad
\bm{y}
=
(y_{i}),
\quad
\bm{g}
=
(g_{i}),
\quad
\bm{h}
=
(h_{i}),
$$
and write the start-finish, start-start and release time constraints as
$$
\bm{y}
=
\bm{A}\bm{x},
\qquad
\bm{x}
\geq
\bm{B}\bm{x},
\qquad
\bm{g}
\leq
\bm{x}
\leq
\bm{h}.
$$

To take into account the time window boundaries and adjusted times, we use the vector notation
$$
\bm{s}
=
(s_{i}),
\qquad
\bm{t}
=
(t_{i}),
\qquad
\bm{p}
=
(p_{i}),
\qquad
\bm{q}
=
(q_{i}).
$$

The vectors of adjusted initiation and completion times take the form
$$
\bm{s}
=
(\bm{x}^{-}\oplus\bm{q}^{-})^{-},
\qquad
\bm{t}
=
\bm{y}\oplus\bm{p}.
$$

The optimal scheduling problem to minimize the maximum flow-time subject to the temporal constraints under consideration now becomes
\begin{equation}
\begin{aligned}
&
\text{minimize}
&&
\bm{s}^{-}\bm{t},
\\
&
\text{subject to}
&&
\bm{s}^{-}
=
\bm{x}^{-}\oplus\bm{q}^{-},
\quad
\bm{t}
=
\bm{y}\oplus\bm{p},
\\
&&&
\bm{A}\bm{x}
=
\bm{y},
\quad
\bm{B}\bm{x}
\leq
\bm{x},
\\
&&&
\bm{g}
\leq
\bm{x}
\leq
\bm{h}.
\end{aligned}
\label{P-st-sxq-typ-Axy-Bxx-gxh}
\end{equation}

Note that, in the context of scheduling problems, it is natural to consider the matrix $\bm{A}$ as column-regular matrix, and the vectors $\bm{p}$, $\bm{q}$ and $\bm{h}$ as regular.

A complete solution to the problem is given by the next result.
\begin{theorem}
\label{T-st-sxq-typ-Axy-Bxx-gxh}
Let $\bm{A}$ be a column-regular matrix, and $\bm{B}$ be a matrix such that $\mathop\mathrm{Tr}(\bm{B})\leq\mathbb{1}$. Let $\bm{p}$, $\bm{q}$ and $\bm{h}$ be regular vectors and $\bm{g}$ be a vector such that $\bm{h}^{-}\bm{B}^{\ast}\bm{g}\leq\mathbb{1}$. Then, the minimum flow-time in problem \eqref{P-st-sxq-typ-Axy-Bxx-gxh} is equal to
\begin{multline}
\theta
=
\bigoplus_{k=1}^{n}\mathop\mathrm{tr}\nolimits^{1/k}(\bm{S}_{k})
\oplus
\bigoplus_{k=1}^{n-1}(\bm{h}^{-}\bm{T}_{k}\bm{g})^{1/k}
\oplus
\bigoplus_{k=1}^{n}(\bm{q}^{-}\bm{S}_{k}\bm{g})^{1/k}
\\
\oplus
\bigoplus_{k=0}^{n-1}(\bm{h}^{-}\bm{T}_{k}\bm{p})^{1/(k+1)}
\oplus
\bigoplus_{k=0}^{n}(\bm{q}^{-}\bm{S}_{k}\bm{p})^{1/(k+1)},
\label{E-theta}
\end{multline}
and the vectors of initiation and completion times are given by
\begin{align}
\bm{x}
&=
(\theta^{-1}\bm{A}\oplus\bm{B})^{\ast}\bm{u},
&
\bm{y}
&=
\bm{A}(\theta^{-1}\bm{A}\oplus\bm{B})^{\ast}\bm{u},
\label{E-x-y}
\\
\bm{s}
&=
(((\theta^{-1}\bm{A}\oplus\bm{B})^{\ast}\bm{u})^{-}\oplus\bm{q}^{-})^{-},
&
\bm{t}
&=
\bm{A}(\theta^{-1}\bm{A}\oplus\bm{B})^{\ast}\bm{u}\oplus\bm{p},
\label{E-s-t}
\end{align}
where $\bm{u}$ is any vector that satisfies the conditions
\begin{equation}
\theta^{-1}\bm{p}\oplus\bm{g}
\leq
\bm{u}
\leq
((\theta^{-1}\bm{q}^{-}\bm{A}\oplus\bm{h}^{-})(\theta^{-1}\bm{A}\oplus\bm{B})^{\ast})^{-}.
\label{I-thetapg-u-thetaqAhthetaAB}
\end{equation}
\end{theorem}
\begin{proof}
First, we eliminate the vectors $\bm{s}$ and $\bm{t}$ from problem \eqref{P-st-sxq-typ-Axy-Bxx-gxh} by representing the objective function as
$$
\bm{s}^{-}\bm{t}
=
(\bm{x}^{-}\oplus\bm{q}^{-})(\bm{y}\oplus\bm{p})
=
\bm{x}^{-}\bm{y}\oplus\bm{q}^{-}\bm{y}\oplus\bm{x}^{-}\bm{p}\oplus\bm{q}^{-}\bm{p}.
$$

Furthermore, we substitute $\bm{y}=\bm{A}\bm{x}$ to reduce \eqref{P-st-sxq-typ-Axy-Bxx-gxh} to the problem
\begin{equation*}
\begin{aligned}
&
\text{minimize}
&&
\bm{x}^{-}\bm{A}\bm{x}\oplus\bm{q}^{-}\bm{A}\bm{x}\oplus\bm{x}^{-}\bm{p}\oplus\bm{q}^{-}\bm{p},
\\
&
\text{subject to}
&&
\bm{B}\bm{x}
\leq
\bm{x},
\\
&&&
\bm{g}
\leq
\bm{x}
\leq
\bm{h},
\end{aligned}
\end{equation*}
which has the form of \eqref{P-xAxxpqxr-Bxx-gxh}, where $\bm{q}^{-}$ is replaced by $\bm{q}^{-}\bm{A}$ and $r$ by $\bm{q}^{-}\bm{p}$.

To apply Theorem~\ref{T-xAxxpqxr-Bxgx-xh}, we note that, under the given conditions, the conditions of the theorem are satisfied as well. Specifically, since both vectors $\bm{p}$ and $\bm{q}$ are regular, we have $r=\bm{q}^{-}\bm{p}>\mathbb{0}$, and thus provide the last condition of Theorem~\ref{T-xAxxpqxr-Bxgx-xh}.

Next, we refine the expression for $\theta$ by applying the identity $\bm{A}\bm{T}_{k}=\bm{S}_{k+1}$, which is valid for all $k=0,\ldots,n-1$.

After some rearrangement of sums, we arrive at \eqref{E-theta}. Both the representation for $\bm{x}$ at \eqref{E-x-y} and the condition on $\bm{u}$ at \eqref{I-thetapg-u-thetaqAhthetaAB} are directly obtained from Theorem~\ref{T-xAxxpqxr-Bxgx-xh}. The other expressions in \eqref{E-x-y} and \eqref{E-s-t} are immediate consequences.
\qed
\end{proof}

As before, the solutions to special cases without constraints are readily derived from the general solution offered by Theorem~\ref{T-st-sxq-typ-Axy-Bxx-gxh}. Specifically, we eliminate the boundary constraint $\bm{g}\leq\bm{x}\leq\bm{h}$ by setting $\bm{g}=\bm{0}$ and $\bm{h}^{-}=\bm{0}^{T}$, and/or the linear inequality constraint with matrix in the form $\bm{B}\bm{x}\leq\bm{x}$ by setting $\bm{B}=\bm{0}$, which further yields the substitutions $\bm{S}_{k}=\bm{A}^{k}$ and $\bm{T}_{k}=\bm{A}^{k}$.

\subsection{Numerical example}

To provide a clear illustration of the above result and of the computational technique, we solve in detail a simple low-dimensional problem. Even though the example under consideration is somewhat artificial, it well demonstrates the applicability of the solution to real-world problems of higher dimension. 

Let us examine a project that involves $n=3$ activities under constraints given by the matrices
$$
\bm{A}
=
\left(
\begin{array}{ccr}
4 & 0 & -\infty
\\
2 & 3 & 1
\\
1 & 1 & 3
\end{array}
\right),
\qquad
\bm{B}
=
\left(
\begin{array}{rrr}
-\infty & -1 & 1
\\
0 & -\infty & 2
\\
-1 & -\infty & -\infty
\end{array}
\right),
$$
and by the vectors
$$
\bm{p}
=
\left(
\begin{array}{c}
4
\\
4
\\
5
\end{array}
\right),
\qquad
\bm{q}
=
\left(
\begin{array}{c}
3
\\
2
\\
1
\end{array}
\right),
\qquad
\bm{g}
=
\left(
\begin{array}{c}
0
\\
0
\\
1
\end{array}
\right),
\qquad
\bm{h}
=
\left(
\begin{array}{c}
2
\\
3
\\
3
\end{array}
\right).
$$

We start with the verification of the existence conditions for regular solutions in Theorem~\ref{T-st-sxq-typ-Axy-Bxx-gxh}. First note that the matrix $\bm{A}$ is obviously column-regular. In what follows, we need the powers of the matrix $\bm{A}$, which have the form
$$
\bm{A}^{2}
=
\left(
\begin{array}{ccc}
8 & 4 & 1
\\
6 & 6 & 4
\\
5 & 4 & 6
\end{array}
\right),
\qquad
\bm{A}^{3}
=
\left(
\begin{array}{ccc}
12 & 8 & 5
\\
10 & 9 & 7
\\
9 & 7 & 9
\end{array}
\right).
$$

Then, we take the matrix $\bm{B}$ and calculate
$$
\bm{B}^{2}
=
\left(
\begin{array}{rrc}
0 & -\infty & 1
\\
1 & -1 & 1
\\
-\infty & -2 & 0
\end{array}
\right),
\qquad
\bm{B}^{3}
=
\left(
\begin{array}{rrc}
0 & -1 & 1
\\
0 & 0 & 2
\\
-1 & -\infty & 0
\end{array}
\right),
\qquad
\mathop\mathrm{Tr}(\bm{B})
=
0.
$$

Furthermore, we successively obtain 
$$
\bm{B}^{\ast}
=
\left(
\begin{array}{rrc}
0 & -1 & 1
\\
1 & 0 & 2
\\
-1 & -2 & 0
\end{array}
\right),
\qquad
\bm{h}^{-}\bm{B}^{\ast}
=
\left(
\begin{array}{rrr}
-2 & -3 & -1
\end{array}
\right),
\qquad
\bm{h}^{-}\bm{B}^{\ast}\bm{g}
=
0.
$$

Since $\mathop\mathrm{Tr}(\bm{B})=\bm{h}^{-}\bm{B}^{\ast}\bm{g}=0$, where $0=\mathbb{1}$, we conclude that the conditions of Theorem~\ref{T-st-sxq-typ-Axy-Bxx-gxh} are fulfilled, and thus the problem under study has regular solutions.

As the next step, we find the minimum value $\theta$ by application of \eqref{E-theta}. The evaluation of $\theta$ involves the matrices
\begin{gather*}
\bm{S}_{0}
=
\bm{I},
\quad
\bm{S}_{1}
=
\bm{A}\oplus\bm{A}\bm{B}\oplus\bm{A}\bm{B}^{2},
\quad
\bm{S}_{2}
=
\bm{A}^{2}\oplus\bm{A}\bm{B}\bm{A}\oplus\bm{A}^{2}\bm{B},
\quad
\bm{S}_{3}
=
\bm{A}^{3},
\\
\bm{T}_{0}
=
\bm{B}^{\ast},
\qquad
\bm{T}_{1}
=
\bm{A}
\oplus
\bm{A}\bm{B}
\oplus
\bm{B}\bm{A},
\qquad
\bm{T}_{2}
=
\bm{A}^{2}.
\end{gather*}

To obtain $\bm{S}_{1}$, $\bm{S}_{2}$ and $\bm{T}_{1}$, we calculate the matrices
$$
\bm{A}\bm{B}
=
\left(
\begin{array}{ccc}
0 & 3 & 5
\\
3 & 1 & 5
\\
2 & 0 & 3
\end{array}
\right),
\qquad
\bm{B}\bm{A}
=
\left(
\begin{array}{crr}
2 & 2 & 4
\\
4 & 3 & 5
\\
3 & -1 & -\infty
\end{array}
\right),
$$
and then the matrices
$$
\bm{A}\bm{B}^{2}
=
\left(
\begin{array}{crc}
4 & -1 & 5
\\
4 & 2 & 4
\\
2 & 1 & 3
\end{array}
\right),
\quad
\bm{A}\bm{B}\bm{A}
=
\left(
\begin{array}{ccc}
6 & 6 & 8
\\
7 & 6 & 8
\\
6 & 4 & 6
\end{array}
\right),
\quad
\bm{A}^{2}\bm{B}
=
\left(
\begin{array}{ccc}
4 & 7 & 9
\\
6 & 5 & 8
\\
5 & 4 & 6
\end{array}
\right).
$$

After substitution of these matrices, we have
$$
\bm{S}_{1}
=
\left(
\begin{array}{ccc}
4 & 3 & 5
\\
4 & 3 & 5
\\
2 & 1 & 3
\end{array}
\right),
\qquad
\bm{S}_{2}
=
\left(
\begin{array}{ccc}
8 & 7 & 9
\\
7 & 6 & 8
\\
6 & 4 & 6
\end{array}
\right),
\qquad
\bm{T}_{1}
=
\left(
\begin{array}{ccc}
4 & 3 & 5
\\
4 & 3 & 5
\\
3 & 1 & 3
\end{array}
\right).
$$

Based on the results obtained, we calculate the sum
$$
\bigoplus_{k=1}^{3}\mathop\mathrm{tr}\nolimits^{1/k}(\bm{S}_{k})
=
4.
$$

To evaluate the remaining sums, we first find the vectors
$$
\bm{h}^{-}\bm{T}_{0}
=
\left(
\begin{array}{ccc}
-2 & -3 & -1
\end{array}
\right),
\qquad
\bm{h}^{-}\bm{T}_{1}
=
\left(
\begin{array}{ccc}
2 & 1 & 3
\end{array}
\right),
\qquad
\bm{h}^{-}\bm{T}_{2}
=
\left(
\begin{array}{ccc}
6 & 3 & 3
\end{array}
\right),
$$
and then obtain
$$
\bm{h}^{-}\bm{T}_{1}\bm{g}
=
4,
\quad
\bm{h}^{-}\bm{T}_{2}\bm{g}
=
6,
\quad
\bm{h}^{-}\bm{T}_{0}\bm{p}
=
4,
\quad
\bm{h}^{-}\bm{T}_{1}\bm{p}
=
8,
\quad
\bm{h}^{-}\bm{T}_{2}\bm{p}
=
10.
$$

With these results, we get another two sums 
$$
\bigoplus_{k=1}^{2}(\bm{h}^{-}\bm{T}_{k}\bm{g})^{1/k}
=
\bigoplus_{k=0}^{2}
(\bm{h}^{-}\bm{T}_{k}\bm{p})^{1/(k+1)}
=
4.
$$

Furthermore, we obtain the vectors
\begin{align*}
\bm{q}^{-}\bm{S}_{0}
&=
\left(
\begin{array}{rrr}
-3 & -2 & -1
\end{array}
\right),
&
\bm{q}^{-}\bm{S}_{1}
&=
\left(
\begin{array}{ccc}
2 & 1 & 3
\end{array}
\right),
\\
\bm{q}^{-}\bm{S}_{2}
&=
\left(
\begin{array}{ccc}
5 & 4 & 6
\end{array}
\right),
&
\bm{q}^{-}\bm{S}_{3}
&=
\left(
\begin{array}{ccc}
9 & 7 & 8
\end{array}
\right),
\end{align*}
and then calculate
\begin{gather*}
\bm{q}^{-}\bm{S}_{1}\bm{g}
=
4,
\qquad
\bm{q}^{-}\bm{S}_{2}\bm{g}
=
7,
\qquad
\bm{q}^{-}\bm{S}_{3}\bm{g}
=
9,
\\
\bm{q}^{-}\bm{S}_{0}\bm{p}
=
4,
\qquad
\bm{q}^{-}\bm{S}_{1}\bm{p}
=
8,
\qquad
\bm{q}^{-}\bm{S}_{2}\bm{p}
=
11,
\qquad
\bm{q}^{-}\bm{S}_{3}\bm{p}
=
13.
\end{gather*}

Finally, we use the above results to find the last two sums
$$
\bigoplus_{k=1}^{3}(\bm{q}^{-}\bm{S}_{k}\bm{g})^{1/k}
=
\bigoplus_{k=0}^{3}(\bm{q}^{-}\bm{S}_{k}\bm{p})^{1/(k+1)}
=
4.
$$

By combining all sums according to \eqref{E-theta}, we have
$$
\theta
=
4.
$$

To describe the solution set defined by \eqref{E-x-y} and \eqref{I-thetapg-u-thetaqAhthetaAB}, we first obtain
$$
\theta^{-1}\bm{q}^{-}\bm{A}
=
\left(
\begin{array}{ccc}
-3 & -3 & -2
\end{array}
\right),
\qquad
\theta^{-1}\bm{q}^{-}\bm{A}\oplus\bm{h}^{-}
=
\left(
\begin{array}{ccc}
-2 & -3 & -2
\end{array}
\right).
$$

We calculate the matrices
$$
\theta^{-1}\bm{A}
\oplus
\bm{B}
=
\left(
\begin{array}{rrr}
0 & -1 & 1
\\
0 & -1 & 2
\\
-1 & -3 & -1
\end{array}
\right),
\qquad
(\theta^{-1}\bm{A}
\oplus
\bm{B})^{2}
=
\left(
\begin{array}{rrc}
0 & -1 & 1
\\
1 & -1 & 1
\\
-1 & -2 & 0
\end{array}
\right),
$$
and then find
$$
(\theta^{-1}\bm{A}
\oplus
\bm{B})^{\ast}
=
\left(
\begin{array}{rrc}
0 & -1 & 1
\\
1 & 0 & 2
\\
-1 & -2 & 0
\end{array}
\right).
$$

With \eqref{E-x-y}, all solutions $\bm{x}=(x_{1},x_{2},x_{3})^{T}$ to the problem are given by
$$
\bm{x}
=
(\theta^{-1}\bm{A}\oplus\bm{B})^{\ast}\bm{u},
\qquad
\bm{u}_{1}
\leq
\bm{u}
\leq
\bm{u}_{2},
$$
where the bounds for the vector $\bm{u}=(u_{1},u_{2},u_{3})^{T}$ in \eqref{I-thetapg-u-thetaqAhthetaAB} are defined as
$$
\bm{u}_{1}
=
\theta^{-1}\bm{p}\oplus\bm{g}
=
\left(
\begin{array}{c}
0
\\
0
\\
1
\end{array}
\right),
\quad
\bm{u}_{2}
=
((\theta^{-1}\bm{q}^{-}\bm{A}\oplus\bm{h}^{-})(\theta^{-1}\bm{A}\oplus\bm{B})^{\ast})^{-1}
=
\left(
\begin{array}{c}
2
\\
3
\\
1
\end{array}
\right).
$$

Note that the columns in the matrix $(\theta^{-1}\bm{A}\oplus\bm{B})^{\ast}$ are equal up to constant factors, and therefore, this matrix can be represented as 
$$
\left(
\begin{array}{rrc}
0 & -1 & 1
\\
1 & 0 & 2
\\
-1 & -2 & 0
\end{array}
\right)
=
\left(
\begin{array}{c}
1
\\
2
\\
0
\end{array}
\right)
\left(
\begin{array}{rrc}
-1
&
-2
&
0
\end{array}
\right).
$$

We introduce a new scalar variable
$$
v
=
\left(
\begin{array}{rrc}
-1
&
-2
&
0
\end{array}
\right)
\bm{u},
$$
and rewrite the solution in the form
$$
\bm{x}
=
\left(
\begin{array}{c}
1
\\
2
\\
0
\end{array}
\right)
v,
\qquad
v_{1}
\leq
v
\leq
v_{2},
$$
where the lower and upper bounds on $v$ are given by
$$
v_{1}
=
\left(
\begin{array}{rrc}
-1
&
-2
&
0
\end{array}
\right)
\bm{u}_{1}
=
1,
\qquad
v_{2}
=
\left(
\begin{array}{rrr}
-1
&
-2
&
0
\end{array}
\right)
\bm{u}_{2}
=
1.
$$

Since both bounds coincide, we have the single vector of initiation time
$$
\bm{x}
=
\left(
\begin{array}{c}
2
\\
3
\\
1
\end{array}
\right).
$$

Finally, using formulas \eqref{E-x-y} and \eqref{E-s-t} gives the vector of completion time and the vectors of adjusted initiation and completion times
$$
\bm{y}
=
\left(
\begin{array}{c}
6
\\
6
\\
4
\end{array}
\right),
\qquad
\bm{s}
=
\left(
\begin{array}{c}
2
\\
2
\\
1
\end{array}
\right),
\qquad
\bm{t}
=
\left(
\begin{array}{c}
6
\\
6
\\
5
\end{array}
\right).
$$

\section*{Acknowledgements}
This work was supported in part by the Russian Foundation for Humanities (grant No. 16-02-00059). The author thanks two referees for valuable comments and suggestions, which have been incorporated into the final version of the manuscript.

\bibliographystyle{abbrvurl}
\bibliography{Direct_solution_to_constrained_tropical_optimization_problems_with_application_to_project_scheduling}

\end{document}